\crefname{hypothesis}{Hypothesis}{Hypotheses}
\crefname{fact}{Fact}{Facts}
\author{Louis Shuo Wang\thanks{Department of Mathematics, University of Tennessee, Knoxville, TN 37996, USA 
  (\email{swang116@vols.utk.edu}).}
\and Jiguang Yu\thanks{Department of Mathematics, University College London, Gower Street, London, WC1E  6BT, United Kingdom 
  (\email{zcahyuc@ucl.ac.uk}).}}
\title{Well-Posedness for the Rosenzweig-MacArthur Model 
with Internal Stochasticity}
\begin{document}
\pagestyle{empty}
\maketitle
\begin{abstract}
In this work, we propose a stochastic version of the Rosenzweig-MacArthur model solely driven by internal demographic noise, extending classical Lotka-Volterra-type systems focused on external noise. We give a criterion for the existence and uniqueness of autonomous stochastic differential equations (SDEs) on an open submanifold of $\mathbb{R}^{n}$, and the framework allows for a wider choice of Lyapunov functions. In the meantime, the invariance of open submanifolds, which is a biologically feasible result and has been implicitly incorporated into many biological and ecological models, facilitates the application of analytic tools typically suited to $\mathbb{R}^{d}$ and indicates the persistence of predator and prey populations, thus providing a criterion for determining whether a population will become extinct. We apply the well-posedness criterion to our stochastic Rosenzweig-MacArthur model and show the existence and uniqueness of solutions. Furthermore, the asymptotic estimates of solutions are obtained, indicating the at most exponential growth of the population with internal stochasticity. Some numerical experiments are performed, which illustrate the discrepancy between the deterministic and stochastic models. Overall, this work demonstrates the broad applicability of our results to ecological models with constrained dynamics, offering a foundation for analyzing extinction, persistence, and well-posedness in systems where internal randomness dominates. This paper not only promotes the development of stochastic modeling and stochastic differential equations in theoretical ecology but also proposes a rigorous mathematical methodology for studying the predator-prey system with internal stochasticity.
\end{abstract}

\begin{keywords}
Rosenzweig-MacArthur predator-prey model, dynamical system, Lyapunov stability,
Lyapunov function, stochastic differential equations
\end{keywords}

\begin{MSCcodes}
34D20, 37H10, 37N25 
\end{MSCcodes}

\section{Introduction}
The classical Predator-Prey Lotka-Volterra model is the following: 
\begin{align}\label{eq1.1}
    \begin{cases}
        \dfrac{dN}{dt}=N(a-bP), \\[6pt]
        \dfrac{dP}{dt}=P(cN-d),
    \end{cases}
\end{align}
which was proposed by Volterra for the predation of one species (the prey $N$) by another species (the predator $P$) to explain the oscillatory behaviors of certain fish catches in the Adriatic 
\cite{murray2007mathematical}. The parameters $a,b,c,d$ in the model are all positive; the term $aN$ describes the intrinsic growth of the prey in the absence of the predator, 
and the term $dP$ represents the death rate of the predator population in the absence of the prey. 
The predation effect of the prey feeding the predator follows from the cross term $NP$,
which is proportional to the available prey resource, the size of the predation population, and can be regarded as the conversion of energy from the prey to the predator:
$bNP$ from the prey and $cNP$ to the predator. 

Although simple, the above model is the starting point for more general and complex models to characterize periodic and oscillatory behaviors in biological and ecological systems, 
such as the interaction of population, chemical reactions, genetic evolution, and many other phenomena in life sciences
\cite{arnold2012stochastic,crow2017introduction,goel1971volterra,ludwig1975persistence,may2019stability,rescigno1973deterministic}. Following from it, many more realistic models have been proposed, 
incorporating more detailed factors in the systems under consideration. One of them is the Rosenzweig-MacArthur predator-prey model:
\begin{align}\label{eq1.2}
    \begin{cases}
        \dfrac{dN}{dt}=rN(1-\dfrac{N}{K})-\dfrac{sNP}{1+s\tau N}, \\[6pt]
        \dfrac{dP}{dt}=-cP+d \dfrac{sNP}{1+s\tau N},
    \end{cases}
\end{align}
where $r,K>0$ and $c,d>0$. $h(N)=\dfrac{sN}{1+s\tau N} $ is the number of prey caught per predator per unit of time, or the per predator kill rate,
and has units of $1/\mbox{time}$, where $s$ is the predator search rate for prey in units of area per unit time, and $\tau $ is the time it takes for the predator to handle (stalk, catch, eat, burp, clean teeth, sleep) the prey before the predator can search for another prey.    
After nondimensionalization, we arrive at the following system:
\begin{align}\label{eq1.3}
\begin{aligned}
    \begin{cases}
\dfrac{dN}{dt} &= N \left( 1-\dfrac{N}{k}  \right) - \dfrac{mNP}{1+N}, \\[6pt]
\dfrac{dP}{dt} &= -cP + \dfrac{mNP}{1+N}.     
    \end{cases}
\end{aligned}
\end{align} 
Furthermore, the non-dimensionalization process can be found in \cref{app1}.

The necessity of randomizing classical deterministic equations and therefore considering stochastic models stems from external environmental random fluctuations and internal randomness
caused by random fluctuations in the growth, reproduction and deaths of individuals in the population \cite{abundo1991stochastic}. The other reason to use the stochastic model is that we cannot explicitly capture all the necessary factors that potentially impact the system under our consideration, and we use stochasticity to model these unknown factors. 
Thus, stochastic models are widespread in biological and ecosystem models and are used to capture random fluctuations implicitly. Based on the above reasoning, we modify (\ref{eq1.3}) and add Brownian motions to the model (\ref{eq1.3}) to characterize the variations, or white noises, in the density of prey $N$ and predator $P$ at any instant $t$:
\begin{align}\label{eq1.4}
    \begin{cases}
        dN&=\underbrace{\left(N \left( 1-\dfrac{N}{k}  \right) - \dfrac{mNP}{1+N} \right)dt}_{I_{1}}
        +\underbrace{\left[N \left( 1+\dfrac{N}{k}  \right) + \dfrac{mNP}{1+N} \right]^{1/2}dB_{t}^{1}}_{I_{2}},\\[6pt]
        dP&=\underbrace{\left(-cP + \dfrac{mNP}{1+N} \right)dt}_{I_{3}}
        +\underbrace{\left[cP + \dfrac{mNP}{1+N} \right]^{1/2}dB_{t}^{2}}_{I_{4}},
    \end{cases}
\end{align}
where $B_{t}^{i}$, $1\leq i\leq 2$ are independent standard Brownian motions. 
In this model, the drifts are given by the terms $I_{1}$ and $I_{3}$, 
while the deviations are given by the terms 
$I_{2}$ and $I_{4}$.  
Variations, which are the square of deviations, are roughly proportional to the total population density $(N+P)$, and this means that, in a larger population, there are more variations in the population density. 
Biologically, a larger population harbours more internal randomness. 
(Note that we only consider the internal randomness in our model (\ref{eq1.4})). 

This paper is organized as follows: \cref{sec:novelty} presents the theoretical contributions and novelty of our work.
We give an introduction to the dynamical properties of the deterministic Rosenzweig-MacArthur predator-prey model in \cref{sec:dynamical},
and prove the existence and uniqueness of solutions to the stochastic Rosenzweig-MacArthur predator-prey model in \cref{sec:stochastic}.
Some numerical results are presented in \cref{sec:numerical}, and we conclude the paper in \cref{sec:conclusion}.

\section{Theoretical Contributions and Novelty}\label{sec:novelty}
First, we establish the existence and uniqueness of solutions for the stochastic model on a constrained domain (an embedded submanifold) rather than the full Euclidean space.
Extending the existence and uniqueness to an embedded submanifold (an open subset of $\mathbb{R}^n$ in our work) is nontrivial. 
It ensures that the model is well-posed and feasible in a biological sense. 
In classic stochastic analysis, solutions in $\mathbb{R}^{n}$ are generally assumed, and SDEs can be formulated on the whole space. 
However, the case where the state variables must satisfy nonlinear constraints (e.g., state variables must be nonnegative or positive) is not addressed. 
Our work appears relatively novel compared to other SDE literature: while standard theorems guarantee solutions of some specific SDEs (these with global Lipschitz continuous coefficients) exist 
and are unique in $\mathbb{R} ^{n}$, we give a criterion (Theorem~\ref{thm4.3}) for the existence and uniqueness of solutions to a special system of SDEs, 
whose coefficients are independent of the time variable. 
Especially, our new criterion applies to many Lokta-Volterra-type problems, showing the significance and applications of our current work.
This criterion involves a Lyapunov function, and in the application to our stochastic Rosenzweig-MacArthur predator-prey model, 
we construct a suitable Lyapunov function adapted to the specific form of internal noise and interaction of the model, 
facilitating the proof of the existence and uniqueness of solutions.
This Lyapunov function is a key tool in our work and can be directly applied to many other Lotka-Volterra-type problems.

Second, we also prove the invariance of the domain, which is an open submanifold of $\mathbb{R} ^{n}$ in our work.
This invariance result enables us to utilize many estimates 
which require some conditions on the whole space $\mathbb{R} ^{n}$ to our model defined on an open submanifold of $\mathbb{R} ^{n}$. 
In stochastic theory, one typically checks that the drift and diffusion terms do not push trajectories outside a prescribed domain, because ensuring that
predator and prey variables never become negative is crucial to biological realism. 
Although many SDE population models assume invariance heuristically, we prove it rigorously here.

Overall, we based on the classical Rosenzweig-MacArthur predator-prey model and made a combination and application of these tools (generalised existence and uniqueness result on a submanifold, Lyapunov function, and domain invariance) 
to a stochastic version model driven only by internal randomness. 
In the existing literature on stochastic modelling, the existence and uniqueness of SDE systems with external randomness have been established, and extinction or persistence results are well-established. 
However, extending them to ensure rigorous behaviour of an intrinsically stochastic predator-prey system is a new development. 

\section{Dynamical Properties}
\label{sec:dynamical}

In this section, some concepts of dynamical systems are first introduced. 
Readers who are already familiar with dynamical systems can skip this part. The ultimate goal is to apply the methods in dynamical systems to study the possible dynamics of our biological system. 
We assume $f\in C^{1}(\mathbb{R}^{n};\mathbb{R}^{n})$ and $f$ satisfies the global Lipschitz condition: 
there exists a positive number $M>0$ such that 
\begin{align*}
    \left\lVert f(x)-f(y) \right\rVert \leq M \left\lVert x-y \right\rVert 
 \end{align*} for all $x,y \in \mathbb{R}^{n}$, where $\left\lVert \cdot  \right\rVert : x=(x_{1},\ldots,x_{n})\in \mathbb{R}^{n} \mapsto \left(\sum\limits_{k=1}^{n} x_{k}^{2}\right)^{1/2}$ is the standard norm, 
 or distance function, on $\mathbb{R}^{n}$. For a dynamical system $D$ characterized by an autonomous system 
\begin{align}\label{eq3.1}
\dot{x}=f(x)
\end{align}
that is defined on $\mathbb{R}^{n}$, the $C^{1}$ flow of $D$ is a $C^{1}$ map $\phi:\mathbb{R}\times \mathbb{R}^{n} \to \mathbb{R}^{n}$ that satisfies 
\begin{align}\label{eq3.2}
&\phi(0,x) = x\quad \quad \quad \quad \quad \quad \,\,\,\,\,\,\,\, \mbox{ for any } x \in \mathbb{R}^{n}, \nonumber \\[6pt] 
&\phi(t,\phi(s,x)) = \phi(t+s,x) \quad \,\, \mbox{ for all } s,t \in \mathbb{R}, x \in \mathbb{R}^{n}, \\[6pt] 
&\dfrac{d}{dt}|_{t=t_{0}} \phi(t,x_{0}) = f(\phi(t_{0},x_{0})). \nonumber
\end{align}
The flow $C^{1}$ generally does not exist for a general continuous vector field $f$ on $\mathbb{R}^{n}$, 
but exists for such a continuously differentiable and globally Lipschitz continuous function \cite{perko2013differential}, 
and this justifies our definition for the $C^{1}$ flow. For such a smooth vector field $f$, 
it can also be shown that, for every $x_{0}\in \mathbb{R}^{n}$, the initial value problem (\ref{eq3.1}) 
with $x(0)=x_{0}$ has a unique solution $x(t)$ for all $t\in \mathbb{R}$. 
Furthermore, from the definition of the $C^{1}$ flow, for any given $x_{0}\in \mathbb{R}^{n}$, 
this is a unique solution $x(t)=\phi(t,x_{0})$ to the above initial value problem (\ref{eq3.1}) and $x(0)=x_{0}$. 
Intuitively, $\phi(t,x_{0})$ is the position where $x_{0}$ at $t=0$ is moved forward at time $t$ by the vector field $f$, and this is similar to a particle movement in the river. 
Thus, $x(t)=\phi(t,x_{0})$ for $t\in \mathbb{R}$ defines a trajectory, or an orbit, of (\ref{eq3.1}) through $x_{0}$.
For a solution $x(t)$ of (\ref{eq3.1}) with the initial condition $x(0)=x_{0}$, we define the trajectory, 
or orbit of $\phi (t,x_{0})$ as $\Gamma(\phi (\cdot, x_{0}))=\left\{ \phi (t,x_{0}):t\in \mathbb{R} \right\} $, 
and the positive semi-trajectory of $\phi (t, x_{0})$ as $\Gamma ^{+}(\phi (\cdot, x_{0}))=\left\{ \phi (t,x_{0}): t\geq 0 \right\} $.

A point $p\in \mathbb{R}^{n}$ is called an $\omega$-limit point of the trajectory $\Gamma(\phi(\cdot,x_{0}))$ 
if there is a sequence of time $\{t_{n}\geq 0\}_{n=1}^{\infty}$ with 
$\underset{n\rightarrow \infty}{\lim} t_{n}=\infty$ such that 
\begin{align*}
    \underset{n\rightarrow \infty}{\lim}\phi(t_{n},x_{0})= p
\end{align*}
The set of all $\omega$-limit points of the trajectory $\Gamma(\phi(\cdot,x_{0}))$ is called 
the $\omega$-limit set of the trajectory $\phi(\cdot,x_{0})$ 
and is denoted by $\omega \left( \Gamma ^{+}(\phi (\cdot, x_{0})) \right) $. 
The $\alpha$ limit point and the $\alpha$-limit set of a trajectory are defined similarly, 
except that we require $t_{n}\rightarrow -\infty$. 
The $\alpha $-limit set and $\omega $-limit set is a characterization of the limit behavior, 
or long-time behavior, of the trajectory of the solution; e.g., if the 
$\omega $-limit set of a trajectory $\Gamma (\gamma )$ is a singleton $\left\{ p \right\} $, then we have \begin{align*}
\underset{t\to \infty }{\lim}\,\gamma (t)=p,
 \end{align*}  
and this shows that $p$ is the eventual limit of $\Gamma (\gamma )$ and a steady state (an equilibrium point) 
of the dynamical system (\ref{eq3.1}). Meanwhile,
if the singleton consisting of a critical point $p$ is the $\omega $-limit set of every solution
of the system (\ref{eq3.1}) (a point $p\in \mathbb{R} ^{n}$ is said to be a critical point of (\ref{eq3.1}) if $f(p)=0$), 
then the trajectory of every solution of the system (\ref{eq3.1}) will eventually approach $p$, and this shows that the critical point $p$ is globally asymptotically stable for this system (\ref{eq3.1}). 
See \cite{haddad2008nonlinear} for an introduction to the Lyapunov stability theory. 

In the end, we could speak of a more general dynamical system defined by some function $f$ between two
manifolds $M$ and $N$, and the concept of flow is borrowed from manifold theory. This function $f$ determines a vector field (still denoted by $f$, which will not cause any confusion) on the domain manifold $M$. Each solution of this dynamical system is in one-to-one correspondence with the integral curve of the vector field $f$, since in each local coordinates on the manifold to find the integral curve is the same as finding the solution to differential equations.
In the context of mainfolds, the flow $\theta $ for a vector field is defined similarly to (\ref{eq3.2}) except that $f$ is a map between two manifolds $M$ and $N$, and the complete space-time space
$\mathbb{R}\times \mathbb{R} ^{n} $ in the definition (\ref{eq3.2}) of the flow is replaced by an open subset $\mathcal{D}$ of $\mathbb{R}\times \mathbb{R} ^{n} $. 
To be more precise, the above definition defines a local flow for the vector field $f$ on $\mathcal{D}$, and we can extend the domain $\mathcal{D}$ of the flow to the maximal domain 
$\mathcal{D}^{\prime} $ and the extended flow $\theta ^{\prime} $ defined on $\mathcal{D}^{\prime} $ is called the maximal flow for the vector field $f$ ($\mathcal{D}^{\prime} $ is the maximal domain means 
we can not extend $f$ to a larger domain $\mathcal{D}^{\prime \prime}$ containing $\mathcal{D}^{\prime} $, and this is mainly because the solution of $f$ will blow up at the boundary of $\mathcal{D}^{\prime} $ 
so we can not extend the solution further). The fundamental theorem of flows \cite{lee2012smooth} guarantees a one-to-one correspondence between vector fields $f$ and maximal flows $\theta $. 
Intuitively, the flow $\theta $ is like a collection of solutions or integral curves for the vector field $f$, and the vector field $f$ is the infinitesimal generator of the flow $\theta$; 
e.g., a good approximation to an integral curve can be obtained by composing many small straight-line motions, with the direction and length of each motion determined by the value of the vector field at the point arrived at in the previous time step, and a flow can be thought of as a sequence of infinitely many infinitesimally small linear steps \cite{lee2012smooth}.
Thus, from this point of view, the study of various types of differential equations, such as ODE and PDE, can be unified by studying the vector fields or dynamical systems in manifolds.                                              

The concept of stability of a dynamical system was introduced by A.M. Lyapunov in 1892 \cite{liapounoff1907probleme,lyapunov1992general}, and we will introduce some concepts of stability of a deterministic dynamical system in this section.
Without loss of generality, we assume that $x=0$ is a critical point of $f$, then $x(t)\equiv 0$ is a trivial solution to (\ref{eq3.1}). 
\begin{definition}[\textbf{Lyapunov Stability}]\label{def3.1}\cite{haddad2008nonlinear}
    \begin{enumerate}[label=(\roman*)]
    \item The zero solution $x(t)\equiv 0$ is Lyapunov stable, if for all $\varepsilon >0$, there exists $\delta =\delta (\varepsilon )>0$ such that if $\left\lVert x(0) \right\rVert <\delta $, then $\left\lVert x(t) \right\rVert <\varepsilon  $ for $t\geq 0$, and the zero solution is unstable if it is not Lyapunov stable.          
    \item The zero solution $x(t)\equiv 0$ is (locally) asymptotically stable if it is Lyapunov stable and there exists $\delta >0$ such that if $\left\lVert x(0) \right\rVert <\delta $, then $\underset{t\to \infty }{\lim}\, x(t)=0$.
    \item The zero solution $x(t)\equiv 0$ is (locally) exponentially stable if there exist a positive constant $\alpha, \beta $, and $\delta $ such that if $\left\lVert x(0) \right\rVert <\delta $, then $\left\lVert x(t) \right\rVert \leq \alpha \left\lVert x(0) \right\rVert e^{-\beta t} $ for $t\geq 0$. 
    \item The zero solution $x(t)\equiv 0$ is globally asymptotically stable if it is Lyapunov stable and for all $x(0)\in \mathbb{R} ^{n}$, $\underset{t\to \infty }{\lim}\,x(t)=0$.
    \item The zero solution $x(t)\equiv 0$ is globally exponentially stable if there exist positive constants $\alpha $ and $\beta $ such that $\left\lVert x(t) \right\rVert \leq \alpha \left\lVert x(0) \right\rVert e^{-\beta t},\,t\geq 0 $ for all $x(0)\in \mathbb{R} ^{n}$.                
    \end{enumerate}
\end{definition}  
Exponential stability implies asymptotic stability, which in turn implies Lyapunov stability.
The Lyapunov direct method uses the Lyapunov function, which characterizes the distance of the solution and the zero solution, to determine the stability of
the zero solution \cite{haddad2008nonlinear}. Let $V:\mathcal{D}\to \mathbb{R} $ be a continuously differentiable function and $\phi (t,x)$ be the unique solution to (\ref{eq3.1}) 
that passes through $x$ at $t=0$, then the derivative of $V$ along the trajectory $\phi (t,x)$ is defined as 
$\dot{V}(x)=\dfrac{d}{dt}|_{t=0}V(\phi (t,x)) = V ^{\prime} (x)f(x)$, and it follows that
if $\dot{V}(x)$ is negative, then $V(x)$ decreases along the trajectory $\phi (t,x)$.        
\begin{theorem}[\textbf{Lyapunov Direct Method}]\label{thm3.2} \cite{haddad2008nonlinear}
    Consider the nonlinear dynamical system (\ref{eq3.1}) and assume that there exists a continuously differentiable function $V:\mathcal{D}\to \mathbb{R} $ such that 
    \begin{enumerate}[label= (\roman*)]
    \item $V(0)=0$,    
    \item $V(x)>0$ for $x\in \mathcal{D}\setminus \{0\}$, 
    \item $\dot{V}(x)\leq 0$ for $x\in \mathcal{D}$.   
    \end{enumerate} 
    Then the zero solution is Lyapunov stable. If, in addition, 
    \begin{enumerate}[label= (\roman*)]
    \item $\dot{V}(x)<0$ for $x\in \mathcal{D}\setminus \{0\}$,     
    \end{enumerate}
    then the zero solution is asymptotically stable. Finally, if there exists scalars $\alpha ,\beta ,\varepsilon >0$, and $p\geq 1$, such that $V:\mathcal{D}\to \mathbb{R}$ satisfies \begin{enumerate}[label= (\roman*)]
    \item $\alpha \left\lVert x \right\rVert ^{p}\leq V(x)\leq \beta \left\lVert x \right\rVert ^{p} $, for $x\in \mathcal{D}$,      
    \item $\dot{V}(x)\leq -\varepsilon V(x)$, for $x\in \mathcal{D}$,  
    \end{enumerate}   
    then the zero solution is exponentially stable.
\end{theorem}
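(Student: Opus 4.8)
The plan is to establish the three conclusions in sequence, each relying on the previous one, using only the positivity of $V$, the sign of $\dot{V}$ along trajectories, and compactness. For \emph{Lyapunov stability}, I would fix $\varepsilon>0$ and choose $r\in(0,\varepsilon]$ small enough that the closed ball $\overline{B_r}=\{x:\lVert x\rVert\le r\}$ is contained in $\mathcal{D}$. Since $V$ is continuous and strictly positive on $\mathcal{D}\setminus\{0\}$ while the sphere $\{\lVert x\rVert=r\}$ is compact and avoids the origin, the number $m:=\min_{\lVert x\rVert=r}V(x)$ is strictly positive. Using $V(0)=0$ and continuity, I would then pick $\delta\in(0,r)$ with $V(x)<m$ whenever $\lVert x\rVert<\delta$. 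The decisive point is that $\dot{V}\le0$ makes $t\mapsto V(\phi(t,x_0))$ nonincreasing, so $\lVert x_0\rVert<\delta$ yields $V(\phi(t,x_0))\le V(x_0)<m$ for all $t\ge0$. If the trajectory ever reached $\lVert x\rVert=r$, at the first such time $V$ would be at least $m$, contradicting this bound; hence $\lVert\phi(t,x_0)\rVert<r\le\varepsilon$ for all $t\ge0$.

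For \emph{asymptotic stability}, I would reuse this $\delta$ so that the trajectory remains in the compact ball $\overline{B_r}$. As $V(\phi(t,x_0))$ is nonincreasing and bounded below by $0$, it converges to some $c\ge0$; I would show $c=0$ by contradiction. If $c>0$, then by positive definiteness and continuity the trajectory stays in a compact annulus $\{\eta\le\lVert x\rVert\le r\}$ for some $\eta>0$, on which the continuous function $\dot{V}$ attains a negative maximum $-\gamma<0$; integrating $\dot{V}\le-\gamma$ gives $V(\phi(t,x_0))\le V(x_0)-\gamma t\to-\infty$, contradicting $V\ge0$. Thus $V(\phi(t,x_0))\to0$, and a final compactness argument (on any annulus where $V$ is bounded below away from zero) upgrades this to $\lVert\phi(t,x_0)\rVert\to0$.

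For \emph{exponential stability}, the two-sided bound $\alpha\lVert x\rVert^p\le V(x)\le\beta\lVert x\rVert^p$ and the inequality $\dot{V}\le-\varepsilon V$ act directly. Along a trajectory $\frac{d}{dt}V(\phi(t,x_0))\le-\varepsilon V(\phi(t,x_0))$, so Gr\"onwall's inequality (comparison with $e^{-\varepsilon t}$) gives $V(\phi(t,x_0))\le V(x_0)e^{-\varepsilon t}$. Combining with the sandwich bound, $\alpha\lVert\phi(t,x_0)\rVert^p\le\beta\lVert x_0\rVert^p e^{-\varepsilon t}$, so $\lVert\phi(t,x_0)\rVert\le(\beta/\alpha)^{1/p}\lVert x_0\rVert\,e^{-(\varepsilon/p)t}$, which is the asserted estimate with decay rate $\varepsilon/p$.

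I expect the main obstacle to be the asymptotic-stability step, and within it the passage from $V(\phi(t,x_0))\to0$ to $\lVert\phi(t,x_0)\rVert\to0$: this is where positive definiteness and the compactness of the forward-invariant ball must be combined, rather than in the formal manipulations, which are routine. A secondary care point is to verify at the outset that the sublevel set one works inside is genuinely forward-invariant and contained in $\mathcal{D}$, so that $\dot{V}(x)=V'(x)f(x)$ is well defined all along the trajectory.
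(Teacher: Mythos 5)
The paper does not prove this theorem at all: it is quoted verbatim from the cited reference \cite{haddad2008nonlinear}, so there is no in-paper argument to compare against. Your proposal is the standard textbook proof of Lyapunov's direct method and it is correct in all three parts: the compactness/sublevel-set argument for stability, the monotone-limit-plus-annulus contradiction for attraction (including the step upgrading $V(\phi(t,x_0))\to 0$ to $\lVert\phi(t,x_0)\rVert\to 0$), and the Gr\"onwall comparison giving the rate $\varepsilon/p$ for exponential stability; your closing caveat about forward invariance of the working sublevel set inside $\mathcal{D}$ is exactly the right point to flag.
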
 
\begin{theorem}[\textbf{Lyapunov Direct Method}]\label{thm3.3} \cite{haddad2008nonlinear}
    Consider the nonlinear dynamical system (\ref{eq3.1}) and assume there exists a continuously differentiable function $V:\mathbb{R} ^{n}\to \mathbb{R} $ such that 
    \begin{enumerate}[label=(\roman*)]
    \item $V(0)=0$,    
    \item $V(x)>0$ for $x\in \mathbb{R} ^{n}\setminus \{0\}$,
    \item $\dot{V}(x)<0$ for $x\in \mathbb{R} ^{n}\setminus \{0\}$, 
    \item $V(x)\to \infty $ as $\left\lVert x \right\rVert \to \infty $.  
    \end{enumerate} 
    Then, the zero solution is globally asymptotically stable. If, alternatively, there exists scalars $\alpha ,\beta ,\varepsilon >0$, and $p\geq 1$, such that $V:\mathbb{R} ^{n}\to \mathbb{R} $ satisfies 
    \begin{enumerate}[label=(\roman*)]
    \item $\alpha \left\lVert x \right\rVert ^{p}\leq V(x)\leq \beta \left\lVert x \right\rVert ^{p} $ for $x\in \mathbb{R} ^{n}$,     
    \item $\dot{V}(x)\leq -\varepsilon V(x)$ for $x\in \mathbb{R} ^{n}$,  
    \end{enumerate}   
    then the zero solution is globally exponentially stable.
\end{theorem}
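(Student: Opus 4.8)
The plan is to prove the two assertions separately, using Theorem~\ref{thm3.2} (applied with $\mathcal{D}=\mathbb{R}^n$) together with the $\omega$-limit set machinery developed above for the global asymptotic stability, and a direct Grönwall-type estimate for the global exponential stability.

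For the first assertion, conditions (i)--(iii) already supply Lyapunov stability of the zero solution by Theorem~\ref{thm3.2} (since $\dot V<0$ implies $\dot V\le 0$), so what remains is to upgrade attraction from local to global, i.e.\ to show $\lim_{t\to\infty}\phi(t,x_0)=0$ for every $x_0\in\mathbb{R}^n$. Here the decisive role is played by the radial unboundedness (iv). First I would fix $x_0$ and note that along the trajectory the map $t\mapsto V(\phi(t,x_0))$ has derivative $\dot V(\phi(t,x_0))\le 0$, so it is non-increasing; hence $\phi(t,x_0)$ stays in the sublevel set $\Omega=\{x:V(x)\le V(x_0)\}$ for all $t\ge 0$. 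Using (iv) and the continuity of $V$, I would argue that $\Omega$ is closed and bounded, hence compact: if $\Omega$ were unbounded there would be points $x_k\in\Omega$ with $\|x_k\|\to\infty$, contradicting $V(x_k)\to\infty$. Consequently the positive semi-trajectory $\Gamma^+(\phi(\cdot,x_0))$ is precompact, so its $\omega$-limit set $\omega(\Gamma^+(\phi(\cdot,x_0)))$ is nonempty, compact, and invariant.

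The core of the argument is then a LaSalle-type invariance step, which I expect to be the main obstacle, precisely because it is where (iv) is indispensable: without radial unboundedness the sublevel sets need not be bounded and trajectories could escape to infinity despite $\dot V<0$. Since $V(\phi(t,x_0))$ is non-increasing and bounded below by $0$, it converges to some limit $L\ge 0$; by continuity of $V$ and the definition of the $\omega$-limit set, $V\equiv L$ on $\omega(\Gamma^+(\phi(\cdot,x_0)))$. I would show $L=0$ by contradiction: if $L>0$ then $0$ is not in the $\omega$-limit set, so any $y$ in it satisfies $y\ne 0$; by invariance the entire trajectory through $y$ lies in the $\omega$-limit set, where $V$ is constant, forcing $\dot V(y)=0$ and contradicting the strict condition (iii). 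Hence $L=0$, and since $0$ is the unique zero of $V$, the $\omega$-limit set reduces to the singleton $\{0\}$. By the fact recorded earlier (a singleton $\omega$-limit set is the actual limit of the trajectory), $\phi(t,x_0)\to 0$, and combined with Lyapunov stability this yields global asymptotic stability.

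For the second assertion the estimate is direct. Writing $v(t)=V(\phi(t,x_0))$, condition (ii) yields the differential inequality $\dot v(t)\le -\varepsilon v(t)$, whence $v(t)\le v(0)e^{-\varepsilon t}$ by Grönwall's inequality (equivalently, an integrating-factor argument applied to $v(t)e^{\varepsilon t}$). Feeding this into the two-sided bound (i) gives $\alpha\|\phi(t,x_0)\|^p\le V(\phi(t,x_0))\le V(x_0)e^{-\varepsilon t}\le \beta\|x_0\|^p e^{-\varepsilon t}$, and taking $p$-th roots produces
\begin{align*}
\|\phi(t,x_0)\|\le \left(\frac{\beta}{\alpha}\right)^{1/p}\|x_0\|\,e^{-(\varepsilon/p)t},\qquad t\ge 0,
\end{align*}
for every $x_0\in\mathbb{R}^n$. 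With $\alpha'=(\beta/\alpha)^{1/p}$ and $\beta'=\varepsilon/p$ this is exactly the global exponential stability condition of Definition~\ref{def3.1}(v), completing the proof.
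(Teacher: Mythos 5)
Your proof is correct. Note that the paper itself offers no proof of this theorem: it is stated as a cited textbook result from the Lyapunov stability literature, so there is no in-paper argument to compare against. Your argument is the standard one and is sound: radial unboundedness (iv) makes the sublevel set $\Omega=\{x: V(x)\le V(x_0)\}$ compact, which keeps the semi-trajectory precompact and makes the $\omega$-limit set nonempty, compact, and invariant; the LaSalle-type step (constancy of $V$ on the $\omega$-limit set forcing $\dot V=0$ there, contradicting (iii) unless the limit value is $L=0$) correctly upgrades local attraction to global attraction, and you rightly flag that (iv) is exactly what prevents escape to infinity. The only ingredient you use that the paper's Section 3 does not explicitly record is the invariance of $\omega$-limit sets of precompact trajectories (the paper only states the fact that a singleton $\omega$-limit set is the actual limit); this is a standard fact for complete flows and is available here since $f$ is assumed globally Lipschitz, but a fully self-contained write-up should cite or prove it. The exponential-stability half, via Gr\"onwall applied to $v(t)=V(\phi(t,x_0))$ and the two-sided bound (i), is exactly right, and the constants $(\beta/\alpha)^{1/p}$ and $\varepsilon/p$ match Definition~\ref{def3.1}(v), which imposes no separate Lyapunov-stability requirement, so nothing further is needed.
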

The functions $V(x)$ used in Theorem~\ref{thm3.2} and Theorem~\ref{thm3.3} are called Lyapunov functions, 
which are continuously differentiable and positive definite functions of the states of a given dynamical system.
Theorem~\ref{thm3.2} and Theorem~\ref{thm3.3} show that if the time derivative, or the time rate of change, of the Lyapunov functions due to perturbations in a neighbourhood of the system's equilibrium is always negative or zero, the system's equilibrium point is Lyapunov stable. 
Additionally, if the time rate of change of the Lyapunov function is strictly negative, then the system's equilibrium point is asymptotically stable.

Unlike the Lyapunov direct method, which can provide global stability conclusions for an equilibrium point of a nonlinear dynamical system,
the method we use in this section is the so-called Lyapunov indirect method, 
which uses the linearization of the (possibly nonlinear) system and the spectral information of this linearization. 
Specifically, the local stability of an equilibrium point to small perturbations can be determined by the real parts of the eigenvalues of the Jacobian matrix evaluated at that equilibrium point. 
In particular, if all of the eigenvalues have negative real parts, then any small perturbation will decay exponentially, indicating stability. 
If some of the eigenvalues, say $\lambda $, have a positive real part, any small perturbation along the direction of the eigenvector corresponding to $\lambda $ will be magnified exponentially, 
thus destabilising the system. For a two-dimensional system and the corresponding $2$-by-$2$ Jacobian matrix, all of the two eigenvalues have a negative real part if and only if the trace of the matrix
is negative and the determinant of the matrix is positive, which proves to be a useful criterion for determining the local stability property of equilibrium points of a $2$ dimensional dynamical system.  
\begin{theorem}[\textbf{Lyapunov Indirect Method}]\label{thm3.4} \cite{haddad2008nonlinear}
    Let $x(t)\equiv 0$ be an equilibrium point of (\ref{eq3.1}) where $f:\mathcal{D}\to \mathbb{R} ^{n}$ is continuously differentiable and $\mathcal{D}$ is an open set containing $0$. 
    Furthermore, let $A=\dfrac{\partial f}{\partial x}|_{x=0} $. Then the following statements hold:
    \begin{enumerate}[label=(\roman*)]
    \item If $\mathrm{Re}\,\lambda <0 $ for all $\lambda \in \mathrm{Spec}(A) $, then the zero solution is exponentially stable.      
    \item If there exists $\lambda \in \mathrm{Spec}(A)$ such that $\mathrm{Re}\,\lambda >0$, then the zero solution is unstable.  
    \end{enumerate} 
\end{theorem}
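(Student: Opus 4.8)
The plan is to exploit the first-order Taylor expansion of $f$ at the equilibrium and reduce both assertions to a Lyapunov/Chetaev argument built entirely from the linearization $A$. Since $f$ is $C^1$ with $f(0)=0$, I would write
\begin{align*}
f(x) = Ax + g(x), \qquad g(x) := f(x) - Ax,
\end{align*}
where $g(0)=0$ and, by differentiability of $f$ at $0$, $\lVert g(x)\rVert / \lVert x\rVert \to 0$ as $x \to 0$; equivalently, for every $\varepsilon>0$ there is a ball $B_{\delta} = \{\lVert x\rVert < \delta\} \subset \mathcal{D}$ on which $\lVert g(x)\rVert \le \varepsilon \lVert x\rVert$. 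This is the only place the nonlinearity enters, and the rest of the argument is purely about the constant matrix $A$.

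For part (i), since every eigenvalue of $A$ has negative real part, $A$ is Hurwitz, so $\lVert e^{At}\rVert \le M e^{-\alpha t}$ for some $M,\alpha>0$, and consequently the Lyapunov matrix equation $A^{\top} P + P A = -I$ has a unique symmetric positive definite solution, given explicitly by the convergent integral $P = \int_{0}^{\infty} e^{A^{\top} t} e^{At}\,dt$. I would take $V(x) = x^{\top} P x$ as the Lyapunov function. Being a positive definite quadratic form, $V$ satisfies $\lambda_{\min}(P)\lVert x\rVert^{2} \le V(x) \le \lambda_{\max}(P)\lVert x\rVert^{2}$, giving the two-sided bound of Theorem~\ref{thm3.2} with $p=2$. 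Differentiating along trajectories and using the Lyapunov equation,
\begin{align*}
\dot V(x) = x^{\top}(A^{\top} P + P A)x + 2 x^{\top} P g(x) = -\lVert x\rVert^{2} + 2 x^{\top} P g(x).
\end{align*}
Choosing $\varepsilon$ above small enough that $2\lVert P\rVert\,\varepsilon \le \tfrac12$ yields $\dot V(x) \le -\tfrac12\lVert x\rVert^{2} \le -\tfrac{1}{2\lambda_{\max}(P)} V(x)$ on $B_{\delta}$. Restricting the domain in Theorem~\ref{thm3.2} to $B_{\delta}$, the hypotheses of its exponential-stability clause hold, and since exponential stability is a local notion (Definition~\ref{def3.1}(iii)), the conclusion follows.

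For part (ii) I would instead invoke Chetaev's instability criterion, the point being to isolate the expanding directions of $A$. After a real linear change of coordinates (which preserves stability type), I would block-diagonalize $A$ into a block $A_{u}$ carrying exactly the eigenvalues with positive real part and a complementary block $A_{r}$ carrying the remaining eigenvalues (real part $\le 0$). Because $-A_{u}$ is Hurwitz, the equation $A_{u}^{\top} P_{u} + P_{u} A_{u} = I$ has a symmetric positive definite solution $P_{u}$, so the quadratic form $W(u) = u^{\top} P_{u} u$ has derivative $u^{\top}(A_{u}^{\top}P_{u}+P_{u}A_{u})u = \lVert u\rVert^{2}$ along the $u$-dynamics, positive definite to leading order. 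Using $W$ as a Chetaev function on the cone where it dominates, and absorbing the higher-order term $g$ on a small ball exactly as in part (i), one gets $\dot W > 0$ there; Chetaev's theorem then forces trajectories starting arbitrarily close to $0$ to leave a fixed neighborhood, which is precisely instability.

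The hard part will be part (ii), specifically the treatment of eigenvalues on the imaginary axis and the coupling between the expanding and the neutral/stable directions. The clean splitting of $\mathbb{R}^{n}$ and the construction of a Chetaev function whose positivity and strict increase survive on a full cone (not merely the unstable subspace) require care, since the $A_{r}$-block can feed back into the estimate for $\dot W$. Controlling this coupling on a small enough ball, so that the expansion driven by $A_{u}$ genuinely dominates both the neutral directions and the nonlinear remainder $g$, is the delicate step; part (i), by contrast, reduces almost mechanically to Theorem~\ref{thm3.2} once the Lyapunov equation is solved.
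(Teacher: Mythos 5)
The paper never proves Theorem~\ref{thm3.4}; it is imported verbatim from \cite{haddad2008nonlinear}, so your proposal can only be measured against the standard linearization argument, not against anything in the text. Your part (i) \emph{is} that standard argument, correctly executed: $P=\int_{0}^{\infty}e^{A^{T}t}e^{At}\,dt$ solves $A^{T}P+PA=-I$, the quadratic form $V(x)=x^{T}Px$ satisfies the two-sided bound with $p=2$, the remainder $g(x)=f(x)-Ax$ is absorbed on a small ball because $\lVert g(x)\rVert\leq\varepsilon\lVert x\rVert$ there, and the exponential-stability clause of Theorem~\ref{thm3.2} applies on the restricted domain. No objection to that half.

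Part (ii), as written, has a genuine gap --- exactly the one you flag at the end but do not close. First, even in the hyperbolic case, the function $W(u)=u^{T}P_{u}u$ built from the unstable block alone cannot serve as a Chetaev function on $\{u\neq 0\}$: along the full system $\dot W=\lVert u\rVert^{2}+2u^{T}P_{u}g_{u}(u,r)$ with $\lVert g_{u}(u,r)\rVert\leq\varepsilon(\lVert u\rVert+\lVert r\rVert)$, and the error term is \emph{not} dominated by $\lVert u\rVert^{2}$ in the region $\lVert r\rVert\gg\lVert u\rVert$, so $\dot W>0$ fails there no matter how small the ball is. The classical repair is to use the sign-indefinite form $W(u,r)=u^{T}P_{u}u-r^{T}P_{r}r$ with $A_{r}^{T}P_{r}+P_{r}A_{r}=-I$: membership in the cone $\{W>0\}$ then forces $\lVert r\rVert\leq c\lVert u\rVert$, which is precisely what lets the remainder be absorbed; your phrase ``the cone where it dominates'' is silently doing this work. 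Second, and worse, when $A$ has imaginary-axis eigenvalues the repaired construction is also unavailable: if $A_{r}v=i\omega v$ with $v\neq0$, then $v^{*}\bigl(A_{r}^{T}P_{r}+P_{r}A_{r}\bigr)v=0\neq-\lVert v\rVert^{2}$, so the equation $A_{r}^{T}P_{r}+P_{r}A_{r}=-I$ has no solution at all. This obstruction lives in the neutral directions of the linearization, so it cannot be removed by shrinking the neighborhood or by sharpening the estimate on $g$. The standard way to finish (e.g.\ Khalil's proof of the corresponding theorem) is a spectral shift: pick $\delta>0$ smaller than the least positive real part of the eigenvalues of $A$ and such that $A-\delta I$ has no imaginary-axis eigenvalues; apply the hyperbolic construction to $A-\delta I$ to obtain a symmetric indefinite $P$ with $P(A-\delta I)+(A-\delta I)^{T}P=Q>0$; then for the nonlinear system $\dot V=x^{T}Qx+2\delta V(x)+2x^{T}Pg(x)\geq\bigl(\lambda_{\min}(Q)-2\varepsilon\lVert P\rVert\bigr)\lVert x\rVert^{2}>0$ on the set $\{V>0\}$ near the origin, and since $V$ takes positive values arbitrarily close to $0$, Chetaev's theorem yields instability. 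With that lemma inserted, your outline becomes a complete proof; without it, part (ii) is an acknowledged but unresolved hole.
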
 

System (\ref{eq1.3}):
\begin{align*}
    \begin{cases}
\dfrac{dN}{dt} &= N \left( 1-\dfrac{N}{k}  \right) - \dfrac{mNP}{1+N}, \\[6pt]
\dfrac{dP}{dt} &= -cP + \dfrac{mNP}{1+N},     
    \end{cases}
\end{align*} 
is a smooth system in a neighbourhood of the first quadrant $Q=\left\{(N, P): N, P\geq 0 \right\}$ and therefore solutions of initial value problems with nonnegative initial conditions exist and are unique. 
We will show that $Q$ is invariant; i.e., solutions satisfying $N(0),P(0)\geq 0$ satisfy $N(t),P(t)\geq 0$ for all $t$. 
Furthermore, we show that solutions are bounded and thus are defined for all $t\geq 0$.

Note that system (\ref{eq1.3}) has the form:
\begin{align*}
    \begin{cases}
    \dfrac{dN}{dt} &= Nf(N,P), \\[6pt]
    \dfrac{dP}{dt} &= Pg(N,P).
    \end{cases}
\end{align*}
If $(N(t), P(t))$ is a solution of such a system, then $\dfrac{dN}{dt}(t)=N(t)\tilde{f}(t)$ where $\tilde{f}(t)=f(N(t), P(t))$ so by the integrating factor technique for ODEs, we have:
\begin{align*}
    \dfrac{d}{dt}\left(e^{-\int_{0}^{t} \tilde{f}(s)ds}N(t) \right) = 0.
\end{align*}
Consequently,
\begin{align*}
    N(t) = N(0)e^{\int_{0}^{t} \tilde{f}(s)ds}.
\end{align*}
From this, we see that if $N(0)=0$ then $N(t)=0$ for all $t$ and if $N(0)>0$, then $N(t)>0$
for all $t$. Similarly, for $P$. This result not only demonstrates the invariance of $Q$ but also  
proves the invariance of the interior of $Q$ and the boundary of $Q$.

The following boundedness result of solutions can be obtained:
\begin{lemma}[\textbf{Boundedness of Solutions}]\label{lemma3.5}\cite{gunaratne2012rosenzweig}
    There exists $R_{0}>0$ such that for all $R\geq R_{0}$, the right triangle $T(R)$ with sides 
    $N=0$, $P=0$ and $N+P=R$ is positively invariant.      
\end{lemma}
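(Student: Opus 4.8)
The plan is to prove positive invariance by showing that the vector field of system~(\ref{eq1.3}) never points strictly outward along the boundary $\partial T(R)$, and then to conclude via a first-exit-time argument (equivalently, the Nagumo subtangentiality criterion for the closed convex set $T(R)$). The boundary consists of three pieces: the two legs lying on $\{N=0\}$ and $\{P=0\}$, and the hypotenuse $\{N+P=R\}$. The two legs require no new work: the invariance of the coordinate axes established immediately above the statement shows that on $\{N=0\}$ one has $\dot N=0$ and on $\{P=0\}$ one has $\dot P=0$, so the flow is tangent to each leg and no trajectory starting inside $T(R)$ can leave through them.

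The heart of the argument is the hypotenuse, and the enabling observation is that the Holling predation term $\tfrac{mNP}{1+N}$ enters the two equations with opposite signs and therefore cancels upon addition:
\begin{align*}
    \frac{d}{dt}(N+P)=N\Bigl(1-\frac{N}{k}\Bigr)-cP.
\end{align*}
Substituting $P=R-N$ to restrict this quantity to the hypotenuse gives a single-variable, downward-opening quadratic in $N$,
\begin{align*}
    \frac{d}{dt}(N+P)\Big|_{N+P=R}=(1+c)N-\frac{N^{2}}{k}-cR,
\end{align*}
whose maximum over $N\in\mathbb{R}$ is attained at $N=\tfrac{(1+c)k}{2}$ and equals $\tfrac{(1+c)^{2}k}{4}-cR$. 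Choosing $R_{0}=\tfrac{(1+c)^{2}k}{4c}$ then forces this maximum to be nonpositive for every $R\ge R_{0}$, so that $\frac{d}{dt}(N+P)\le 0$ everywhere on $\{N+P=R\}$; that is, the field does not point out of $T(R)$ across the hypotenuse (whose outward normal points in the $(1,1)$ direction).

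With the field tangent to the legs and inward-or-tangent on the hypotenuse, positive invariance follows. I would make this rigorous through a first-exit-time argument: if a trajectory starting in $T(R)$ left, there would be a first time at which the sum $N+P$ reached $R$ from below, forcing $\frac{d}{dt}(N+P)\ge 0$ there, contradicting the strict inequality available for $R>R_{0}$; the threshold case $R=R_{0}$ then follows since $T(R_{0})=\bigcap_{R>R_{0}}T(R)$ is an intersection of positively invariant sets. The main subtlety — a mild one — is the treatment of the two corners $(R,0)$ and $(0,R)$ and the non-strict tangency at $R=R_{0}$, where smoothness of the field and convexity of $T(R)$ (so that subtangentiality reduces to the scalar sign condition already verified) make the conclusion clean. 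Concretely, one checks that at the corners the field reduces to $(R(1-R/k),0)$ and $(0,-cR)$ respectively, both of which point into $T(R)$ once $R\ge R_{0}\ge k$, the inequality $R_{0}\ge k$ holding because $(1+c)^{2}\ge 4c$.
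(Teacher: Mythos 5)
Your proof is correct. There is, however, nothing in the paper to compare it against: Lemma~\ref{lemma3.5} is imported by citation from \cite{gunaratne2012rosenzweig} with no proof given, so your argument supplies the standard reasoning behind that citation rather than paralleling or diverging from an in-paper proof. The decisive points all check out: the Holling terms cancel when the two equations are added, giving $\frac{d}{dt}(N+P)=N(1-N/k)-cP$; on the line $N+P=R$ this becomes the concave quadratic $(1+c)N-N^{2}/k-cR$ with maximum $\frac{(1+c)^{2}k}{4}-cR$, which is nonpositive for all $R\ge R_{0}=\frac{(1+c)^{2}k}{4c}$; and exit through the two legs is excluded by the axis invariance proved in the paper just above the lemma. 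Two small polish points. First, your exit-time phrase ``first time at which $N+P$ reached $R$ from below'' does not literally cover trajectories that start on the hypotenuse; take instead the last time $t_{0}$ at which $N+P\le R$ before an alleged exceedance, where the one-sided difference quotient still forces $\frac{d}{dt}(N+P)\ge 0$ and the same contradiction results. Second, you can dispense with both the strict-inequality/intersection step and the corner discussion: your own computation shows $\frac{d}{dt}(N+P)\le \frac{(1+c)^{2}k}{4}-cR'\le -c\left(R'-R_{0}\right)\le 0$ at every point of the quadrant lying on a line $N+P=R'$ with $R'\ge R_{0}$, i.e. $N+P$ is nonincreasing on all of $\left\{N\ge 0,\,P\ge 0,\,N+P\ge R_{0}\right\}$, which yields positive invariance of $T(R)$ for every $R\ge R_{0}$, including the threshold case, in one stroke.
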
 
Since every initial point $(N(0),P(0))\in Q$ satisfies $(N(0),P(0))\in T(R)$ for some $R\geq R_{0}$,
this shows that all solutions starting in $Q$ are bounded for $t\geq 0$ since $(N(t),P(t))\in T(R), t\geq 0$.      

We next turn our attention to our predator-prey model (\ref{eq1.3}), and we hope to find its dynamics.
Let $f(N,P)=N \left( 1-\dfrac{N}{k}  \right) - \dfrac{mNP}{1+N}$ and $g(N,P)=-cP + \dfrac{mNP}{1+N} $. 
To calculate the critical points of (\ref{eq1.3}), we require $f(N,P)=g(N,P)=0$.
When $P=0$, $N=0$ or $N=k$. When $P\neq 0$, $N\neq 0$ and we see in this case 
$N^{*}=\dfrac{c}{m-c},P^{*}=\dfrac{k(m-c)-c}{k(m-c)^{2}}$. 
So, there are three critical points: $K_{1}=(0,0)$, $K_{2}=(k,0)$, 
and $K_{3}=\left(\dfrac{c}{m-c},\dfrac{k(m-c)-c}{k(m-c)^{2}}\right) $. 
The existence of $K_{3}$ requires 
\begin{align}\label{eq3.3}
m>c \quad \text{and} \quad k(m-c)>c.
 \end{align} 
Here, $K_{1}$ means the extinction of two species, $K_{2}$ means the sole existence of the prey, 
and $K_{3}$ means the co-existence of two species. 
We claim that if $m>c$ or $k(m-c)>c$ fails to hold, the predator population will go extinct:
\begin{enumerate}
    \item If $m\leq c$, the predator population will go extinct:
    \begin{align*}
    \dfrac{dP}{dt}=P \left( -c+\dfrac{mN}{1+N}  \right) \leq P \left( m-c \right) \leq  0.    
     \end{align*}    
     So $P(t)\to 0$ as $t\to \infty $.
    \item If $m>c$ and $k(m-c)\leq c$, the prey population will go extinct:
    When $k(m-c)\leq c$, $k\leq \dfrac{c}{m-c} $. We then have \begin{align*}
    N(t)\leq k\leq \dfrac{c}{m-c}, 
     \end{align*}
     which is equivalent to \begin{align*}
     \dfrac{mN}{1+N}\leq c. 
      \end{align*}    
     In this case, \begin{align*}
     \dfrac{dP}{dt}=P \left( -c+\dfrac{mN}{1+N}  \right)\leq 0.  
      \end{align*} 
        So $P(t)\to 0$ as $t\to \infty $.
    \end{enumerate}  

So hereafter, we assume $m>c$ and $k(m-c)>c$ holds. 

The Jacobian matrix of (\ref{eq1.3}) can be calculated as \begin{align*}
J&=\begin{pmatrix}
\dfrac{\partial f}{\partial N}  &\dfrac{\partial f}{\partial P}  \\[8pt]
\dfrac{\partial g}{\partial N}  &\dfrac{\partial g}{\partial P} 
\end{pmatrix} \\ 
&=\begin{pmatrix}
1-\dfrac{2N}{k}-\dfrac{mP}{(1+N)^{2}}   & -\dfrac{mN}{1+N}  \\[8pt]
\dfrac{mP}{(1+N)^{2}}                   & -c+\dfrac{mN}{1+N} 
\end{pmatrix}.
\end{align*} 
The Jacobian at $K_{1}=(0,0)$ is \begin{align*}
J(K_{1})=\begin{pmatrix}
1 & 0 \\
0 & -c
\end{pmatrix}.
 \end{align*}  
and $K_{1}$ is a saddle point.

The Jacobian matrix at $K_{2}=(k,0)$ is \begin{align*}
J(K_{2})=\begin{pmatrix}
-1 & -\dfrac{mk}{1+k}  \\[8pt]
0 & \dfrac{mk}{1+k} - c
\end{pmatrix}.
 \end{align*}     
In view of (\ref{eq3.3}), $K_{2}$ is a saddle point.

The Jacobian matrix at 
$K_{3}=\left(\dfrac{c}{m-c},\dfrac{k(m-c)-c}{k(m-c)^{2}}\right) $ is 
\begin{align*}
J(K_{3})=\begin{pmatrix}
1-\dfrac{2N^{*}}{k}-\dfrac{mP^{*}}{(1+N^{*})^{2}} &-c \\[8pt]
\dfrac{mP^{*}}{(1+N^{*})^{2}}                     &0
\end{pmatrix}.
 \end{align*}  
 It has a positive determinant, so local stability is determined by the trace, which, by simple calculations,
can be written as 
\begin{align*}
\left( 1+N^{*} \right)\mathrm{Trace}(J(K_{3})) = \left( 1-\dfrac{2N^{*}}{k}  \right)\left( 1+N^{*} \right)-\left( 1-\dfrac{N^{*}}{k}\right)
                                             = \dfrac{2N^{*}}{k}\left( \dfrac{k-1}{2}-N^{*}  \right)     
\end{align*} 
We then have the following result:
\begin{lemma}[\textbf{Stability of Positive Equilibrium}]\label{lemma3.6} \cite{gunaratne2012rosenzweig}
    Coexistence equilibrium $K_{3}=(N^{*},P^{*})$ is a sink if $N^{*}>\dfrac{k-1}{2}$ and a source if $N^{*}<\dfrac{k-1}{2}$.   
\end{lemma}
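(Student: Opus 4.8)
The plan is to read off the local stability of $K_3$ from the Lyapunov indirect method (Theorem~\ref{thm3.4}), so that everything reduces to the signs of the real parts of the eigenvalues of $J(K_3)$. For a $2\times 2$ matrix these signs are governed entirely by the trace and the determinant, as noted in the discussion preceding Theorem~\ref{thm3.4}. First I would record that the determinant is positive: expanding along the second row of $J(K_3)$ (whose $(2,2)$ entry vanishes) gives $\det J(K_3) = c\,\frac{mP^*}{(1+N^*)^2}$, which is strictly positive because $c,m,P^*>0$ at the coexistence equilibrium. This fixes us in the regime where the two eigenvalues are either both real of the same sign or a complex-conjugate pair.

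Next I would pin down the sign of the trace. Since the $(2,2)$ entry of $J(K_3)$ is zero, the trace equals its $(1,1)$ entry $1-\frac{2N^*}{k}-\frac{mP^*}{(1+N^*)^2}$. To evaluate the awkward term $\frac{mP^*}{(1+N^*)^2}$ I would use the two equilibrium relations coming from $f(N^*,P^*)=g(N^*,P^*)=0$: the condition $g=0$ gives $\frac{mN^*}{1+N^*}=c$, while dividing $f=0$ by $N^*>0$ gives $\frac{mP^*}{1+N^*}=1-\frac{N^*}{k}$. The latter yields $\frac{mP^*}{(1+N^*)^2}=\frac{1}{1+N^*}\bigl(1-\frac{N^*}{k}\bigr)$, and substituting this in and clearing the factor $1+N^*>0$ reproduces exactly the factored identity already displayed in the text,
\begin{align*}
(1+N^*)\,\mathrm{Trace}(J(K_3)) = \frac{2N^*}{k}\left(\frac{k-1}{2}-N^*\right).
\end{align*}

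Finally I would convert this into the stability dichotomy. Because $1+N^*>0$ and $\frac{2N^*}{k}>0$, the trace has the same sign as $\frac{k-1}{2}-N^*$. Combined with $\det J(K_3)>0$, the common sign of the real parts of the two eigenvalues coincides with the sign of the trace, in both the real and the complex-conjugate case. Hence if $N^*>\frac{k-1}{2}$ the trace is negative, both eigenvalues have negative real part, and Theorem~\ref{thm3.4}(i) makes $K_3$ exponentially stable, i.e.\ a sink; if $N^*<\frac{k-1}{2}$ the trace is positive, an eigenvalue with positive real part exists, and Theorem~\ref{thm3.4}(ii) makes $K_3$ unstable, i.e.\ a source. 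The only genuinely delicate point is the algebraic reduction of the trace via the equilibrium constraints, together with the careful bookkeeping of the complex-conjugate case (confirming that a positive trace with positive determinant really does force a positive real part); since the factored trace formula is already supplied in the excerpt, the remaining argument is essentially routine sign-checking.
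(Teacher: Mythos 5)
Your proposal is correct and follows essentially the same route as the paper: compute $J(K_3)$, note its determinant is positive, reduce stability to the sign of the trace, and derive the factored identity $(1+N^*)\,\mathrm{Trace}(J(K_3)) = \frac{2N^*}{k}\left(\frac{k-1}{2}-N^*\right)$ using the equilibrium relations. The only difference is that you spell out the "simple calculations" (the substitutions $\frac{mN^*}{1+N^*}=c$ and $\frac{mP^*}{1+N^*}=1-\frac{N^*}{k}$) and the trace--determinant sign bookkeeping that the paper leaves implicit.
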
 
We can make the above conclusion more clear by the value of $N^{*}=\dfrac{c}{m-c}$. 
$K_{3}$ is a sink if $k<\dfrac{m+c}{m-c} $ and a source if $k>\dfrac{m+c}{m-c}$.
So a Hopf bifurcation occurs at $k=\dfrac{m+c}{m-c}$, and the stability of $K_{3}$ changes from a sink to a source as the real part of the eigenvalues of the Jacobian changes sign from negative to positive. 
We also have the following result:
\begin{theorem}[\textbf{Stable Limit Cycle}]\label{thm3.7} \cite{gunaratne2012rosenzweig} 
    Assume that $N^{*}<\dfrac{k-1}{2}$. Then there exists a periodic solution of (\ref{eq1.3}). 
    Every solution starting in the interior of $Q$, except for the positive equilibrium, has a periodic orbit as its limit set.  
\end{theorem}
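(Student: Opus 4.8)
The plan is to invoke the Poincar\'e--Bendixson theorem for planar flows \cite{perko2013differential}, so that the bulk of the work reduces to controlling the possible $\omega$-limit sets of interior trajectories. Fix a solution $\gamma(t)=(N(t),P(t))$ with $(N(0),P(0))$ in the interior of $Q$ and different from $K_{3}$. By the invariance of the interior of $Q$ established above, $\gamma$ stays in the interior for all $t\geq 0$, and by \cref{lemma3.5} its positive semi-trajectory is contained in some positively invariant triangle $T(R)$, hence is bounded. Consequently its $\omega$-limit set $\omega(\gamma)$ is nonempty, compact, connected, and invariant. The generalized Poincar\'e--Bendixson theorem then forces $\omega(\gamma)$ to be one of three types: a single equilibrium, a periodic orbit, or a graphic (a cyclic chain of equilibria joined by heteroclinic or homoclinic orbits). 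The remaining steps rule out everything except a periodic orbit.

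First I would exclude the single-equilibrium case. Since we are in the regime $N^{*}<\tfrac{k-1}{2}$, \cref{lemma3.6} tells us that $K_{3}$ is a source; a source has a neighbourhood on whose boundary the flow points strictly outward, so no trajectory other than $K_{3}$ itself can return arbitrarily close to $K_{3}$, and therefore $K_{3}\notin\omega(\gamma)$. The boundary equilibria $K_{1}=(0,0)$ and $K_{2}=(k,0)$ are saddles, as read off from $J(K_{1})$ and $J(K_{2})$ under (\ref{eq3.3}); their stable manifolds lie in the invariant axes (the $P$-axis for $K_{1}$, the $N$-axis for $K_{2}$), while their unstable directions point into the interior of $Q$. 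An interior trajectory cannot lie on either stable manifold, so it cannot converge to $K_{1}$ or $K_{2}$. Hence $\omega(\gamma)$ is not a single equilibrium.

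Next I would rule out a graphic, and this is where I expect the main obstacle to lie, since it is really a persistence statement about the boundary. Any connecting orbit forming part of a graphic must converge, in forward time, to one of the equilibria, i.e.\ it must lie on that equilibrium's stable manifold. As $K_{3}$ is a source it has no stable manifold, so it cannot be a vertex of any graphic; the stable manifolds of $K_{1}$ and $K_{2}$ are confined to the axes. The only heteroclinic connection available on the boundary is the orbit running from $K_{1}$ to $K_{2}$ along the $N$-axis, and to close this into a cycle one would need a further orbit converging to $K_{1}$, hence lying on the $P$-axis; but the interior-directed unstable branch of $K_{2}$ can never reach the $P$-axis, by the interior invariance of $Q$. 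Thus no cyclic chain of connecting orbits exists, and the graphic case is impossible. The whole delicate point is this exclusion of boundary cycles, which rests on identifying the stable manifolds of the two boundary saddles with the invariant axes together with the interior invariance established earlier.

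With the single-equilibrium and graphic cases eliminated, $\omega(\gamma)$ contains no equilibria, so the Poincar\'e--Bendixson theorem leaves only the possibility that $\omega(\gamma)$ is a periodic orbit; since the invariant axes carry no periodic orbits, this cycle lies in the interior of $Q$. This shows that every solution starting in the interior of $Q$ other than $K_{3}$ has a periodic orbit as its $\omega$-limit set, and in particular a periodic solution of (\ref{eq1.3}) exists, as claimed.
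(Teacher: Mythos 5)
The paper never proves Theorem~\ref{thm3.7}: it is stated as a result imported from the cited reference, and the only follow-up in the text is a further citation for uniqueness of the cycle. So there is no internal proof to compare yours against; what can be assessed is correctness, and your Poincar\'e--Bendixson argument is sound and is the standard route to this statement. You correctly assemble exactly the ingredients the paper does establish: interior invariance of $Q$ and boundedness via \cref{lemma3.5} (so $\omega$-limit sets are nonempty, compact, connected), the repelling character of $K_{3}$ from \cref{lemma3.6} in the regime $N^{*}<\tfrac{k-1}{2}$ (a hyperbolic source cannot lie in the $\omega$-limit set of any other trajectory), and the saddle structure of $K_{1}$, $K_{2}$ under (\ref{eq3.3}) with stable manifolds contained in the invariant axes, which interior trajectories can never meet. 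This eliminates the single-equilibrium case and the graphic case of the trichotomy, leaving a periodic orbit, necessarily interior since the axes carry monotone one-dimensional dynamics. Two small imprecisions, neither damaging: (i) the unstable direction of $K_{1}$ lies along the $N$-axis, i.e.\ in the boundary of $Q$, not "into the interior" --- only the unstable branch of $K_{2}$ enters the interior; (ii) your graphic exclusion explicitly treats only the heteroclinic cycle $K_{1}\to K_{2}\to K_{1}$, whereas a graphic could a priori also be a homoclinic loop at $K_{1}$ or $K_{2}$; these are disposed of by the same bookkeeping (an orbit homoclinic to $K_{1}$ would have to lie simultaneously on the $P$-axis and the $N$-axis, and one homoclinic to $K_{2}$ would have to be an interior orbit lying on the $N$-axis), so you should state this for completeness.
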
 
\begin{theorem}[\textbf{Stable Positive Equilibrium}]\label{thm3.8} \cite{gunaratne2012rosenzweig}
    Assume that $N^{*}>\dfrac{k-1}{2}$. Then there are no periodic orbits of (\ref{eq1.3}).
    Every solution starting in the interior of $Q$ converges to the positive equilibrium.  
\end{theorem}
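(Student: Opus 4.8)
The plan is to combine the local analysis already in hand with the Poincaré–Bendixson theory for planar flows. By \cref{lemma3.6}, the hypothesis $N^{*}>\frac{k-1}{2}$ makes $K_{3}$ a sink and hence locally asymptotically stable, while \cref{lemma3.5} confines every forward trajectory starting in $Q$ to a compact triangle $T(R)$. Consequently the $\omega$-limit set of every interior point is nonempty, compact, and connected, and the Poincaré–Bendixson theorem forces it to be one of three objects: a single equilibrium, a periodic orbit, or a cycle of equilibria joined by connecting orbits (a graphic). The strategy is to eliminate the latter two possibilities, leaving only convergence to $K_{3}$.

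The decisive step is to rule out periodic orbits. First I would observe, via the Poincaré index theorem, that any closed orbit contained in the open first quadrant encloses equilibria whose indices sum to $+1$; since $K_{1}=(0,0)$ and $K_{2}=(k,0)$ are saddles (index $-1$) lying on the invariant coordinate axes, they belong to the unbounded exterior of such a curve, so an interior cycle must surround $K_{3}$ alone. To exclude such a cycle I would apply Dulac's criterion with the candidate
\begin{align*}
B(N,P)=\frac{1+N}{NP},
\end{align*}
which is smooth and positive on the open quadrant. Writing $F,G$ for the right-hand sides of (\ref{eq1.3}), a direct computation gives
\begin{align*}
\frac{\partial}{\partial N}\left(B F\right)+\frac{\partial}{\partial P}\left(B G\right)=\frac{1}{P}\left(1-\frac{1}{k}-\frac{2N}{k}\right)=\frac{k-1-2N}{kP},
\end{align*}
a quantity that is nonpositive exactly on the half-plane $N\ge\frac{k-1}{2}$.

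The hard part — and it is an essential difficulty rather than a technical nuisance — is that this Dulac divergence changes sign precisely at $N=\frac{k-1}{2}$, which is the Hopf threshold identified just before \cref{lemma3.6}. Because $N^{*}>\frac{k-1}{2}$ only guarantees negativity of the divergence near the equilibrium, one cannot integrate it over the interior of an arbitrary cycle and reach a contradiction: a large orbit around $K_{3}$ might protrude into the region $N<\frac{k-1}{2}$, where the sign reverses. The same obstruction afflicts the natural Gause-type Lyapunov function, whose derivative factors as a product controlled by the prey nullcline and therefore also fails to be sign-definite beyond the nullcline peak at $N=\frac{k-1}{2}$. The resolution I would import from \cite{gunaratne2012rosenzweig} exploits the location of $K_{3}$ on the \emph{decreasing} branch of the prey nullcline: since limit cycles are produced only on the unstable side $N^{*}<\frac{k-1}{2}$ (\cref{thm3.7}) and the Hopf bifurcation occurs exactly at $N^{*}=\frac{k-1}{2}$, the uniqueness theory for limit cycles of Holling type II systems shows that no cycle survives once the equilibrium is a stable sink.

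With periodic orbits excluded, I would finish by dismissing graphics. The only equilibria are $K_{1},K_{2},K_{3}$, so any graphic attracting an interior trajectory would have to be built from the boundary saddles $K_{1}$ and $K_{2}$, whose stable separatrices lie along the coordinate axes. Since the axes and the interior of $Q$ are each invariant (established above), an interior orbit can never approach those boundary separatrices, and a short sign check on the saddle eigendirections shows the flow leaves a neighbourhood of the boundary into the interior, so the boundary is repelling and cannot serve as an $\omega$-limit set for an interior solution. Hence the $\omega$-limit set of every interior trajectory is forced to be the singleton $\{K_{3}\}$, which yields convergence to the positive equilibrium and, a posteriori, reconfirms the absence of periodic orbits.
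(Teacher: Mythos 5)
Your overall scaffolding is sound and matches the standard route: Lemma~\ref{lemma3.5} gives compact positively invariant triangles, Poincar\'e--Bendixson reduces interior $\omega$-limit sets to equilibria, cycles, or graphics, your index argument correctly shows any interior cycle must encircle $K_{3}$ alone, and your Dulac computation for $B=(1+N)/(NP)$ is verifiably correct. The genuine gap is that the central claim of the theorem --- nonexistence of periodic orbits --- is never actually proved. You candidly note that your Dulac divergence $\frac{k-1-2N}{kP}$ changes sign at $N=\frac{k-1}{2}$, and then bridge the hole with three appeals that do not close it, jointly or separately: Theorem~\ref{thm3.7} asserts existence of cycles when $N^{*}<\frac{k-1}{2}$ and is silent about the regime $N^{*}>\frac{k-1}{2}$; a Hopf bifurcation at $N^{*}=\frac{k-1}{2}$ is a local statement about small-amplitude cycles and cannot exclude a large cycle protruding into $\{N<\frac{k-1}{2}\}$ --- exactly the cycles your estimate fails to control; and Cheng's uniqueness theorem \cite{cheng1981uniqueness} gives \emph{at most one} cycle, whereas ``at most one cycle'' plus ``$K_{3}$ is a sink'' is a priori consistent with exactly one cycle, repelling from its inside, whose open interior is the basin of the sink. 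So the sentence ``limit cycles are produced only on the unstable side'' is precisely the statement to be proved; as written, that step is circular. (The paper itself states the theorem with a citation rather than a proof, so the comparison here is with the classical argument in the cited literature.)

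The repair stays inside Dulac's method but requires the one-parameter family
\begin{align*}
B_{\alpha}(N,P)=\frac{1+N}{N}\,P^{\alpha-1},\qquad \alpha>0,
\end{align*}
of which your $B$ is the member $\alpha=0$. Writing $F,G$ for the right-hand sides of (\ref{eq1.3}) and using $(m-c)N^{*}=c$, one finds
\begin{align*}
\frac{\partial}{\partial N}\left(B_{\alpha}F\right)+\frac{\partial}{\partial P}\left(B_{\alpha}G\right)
=P^{\alpha-1}\left[\frac{k-1-2N}{k}+\alpha\,(m-c)\,\frac{N-N^{*}}{N}\right],
\end{align*}
so nonpositivity on the strip $\{0<N<k\}$ is equivalent to the line $q(N)=\alpha k(m-c)(N^{*}-N)$ lying above the concave parabola $p(N)=N(k-1-2N)$ there. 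The hypothesis $N^{*}>\frac{k-1}{2}$ enters exactly here: it puts the point $(N^{*},0)$ strictly above the parabola, and the tangent line from $(N^{*},0)$ touching at $N_{0}=N^{*}-\sqrt{N^{*}\left(N^{*}-\frac{k-1}{2}\right)}$ has negative slope, since $N_{0}>\frac{k-1}{4}$ reduces after squaring to $\frac{1}{4}\left(\frac{k-1}{2}\right)^{2}>0$ (for $k\leq 1$ your original $\alpha=0$ divergence is already negative, so that case is trivial). Choosing $\alpha$ so that $-\alpha k(m-c)$ equals that tangent slope makes the bracket $\leq 0$ on $(0,k)$, vanishing only on the line $N=N_{0}$. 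Since $\{N\leq k\}$ is positively invariant ($\dot N<0$ on $N=k$, $P>0$) and any cycle encircles $K_{3}$ with $N^{*}<k$, every putative cycle lies in this strip, and Green's theorem yields $0<0$, the desired contradiction. Finally, your exclusion of graphics (``the boundary is repelling'') should be tightened: the clean tool is the Butler--McGehee lemma, which shows that a boundary saddle in an interior $\omega$-limit set would force points of its stable manifold --- the coordinate axes --- into that limit set, whose backward orbits are unbounded, contradicting Lemma~\ref{lemma3.5}. With these two repairs your Poincar\'e--Bendixson trichotomy collapses to $\omega(x)=\{K_{3}\}$, as intended.
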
 
In \cite{cheng1981uniqueness}, the author showed that the periodic orbit in Theorem~\ref{thm3.7} is unique, and therefore it is the omega limit set of every solution starting in the interior of $Q$.
The phase portraits with vector fields are depicted in Figure~\ref{fig:fig} 
for the case $N^{*}<\dfrac{k-1}{2}$ ($m=3,c=1,k=3$) and $N^{*}>\dfrac{k-1}{2}$ ($m=3,c=1,k=1.5$), respectively.

\begin{figure}[H]
    \centering
    \hspace*{-1.6cm} \includegraphics[width=1.1\textwidth]{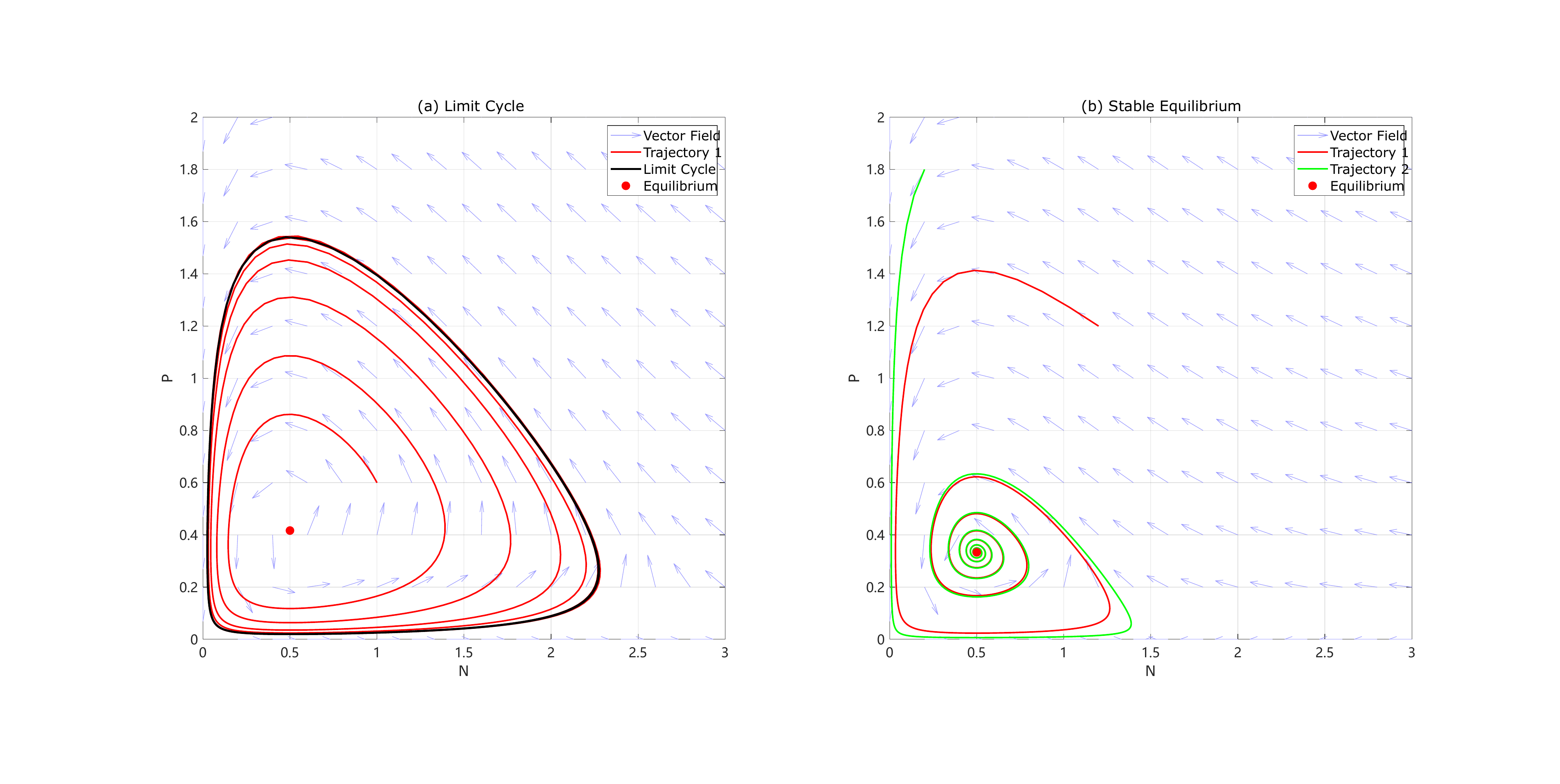}
    \caption{Figure (a) is the phase portrait of the system (\ref{eq1.3}) with $m=3,c=1,k=3$, 
    and Figure (b) is the phase portrait of the system (\ref{eq1.3}) with $m=3,c=1,k=1.5$.
    The short arrows are approximations of the vector field of the system (\ref{eq1.3}). 
    For the stable limit cycle, another trajectory with this cycle as its omega limit set is shown in Figure (a); 
    while two additional trajectories with the positive equilibrium as their omega limit set are shown in Figure (b).}
    \label{fig:fig}
\end{figure}

\section{Stochastic Properties}
\label{sec:stochastic}

We first fix a probability space $(\Omega ,\mathcal{F} ,P)$ and a $d$-dimensional Brownian motion $W = \left\{W_{t},\mathcal{F}_{t}^{W};0\leq t<\infty \right\}$ on it, where $\mathcal{F}_{t}^{W}=\sigma \left(W_{s};0\leq s\leq t \right)$. 
When the initial condition $X_{0}=\xi\in \mathbb{R}^{d}$ is fixed and independent of $\mathcal{F} _{\infty }^{W}=\sigma \left( \bigcup\limits_{t\geq 0}\,\mathcal{F} _{t}^{W} \right)$, we construct an 
appropriate filtration as follows: Let \begin{align*}
\mathcal{G}_{t}=\sigma (\xi)\vee \mathcal{F} _{t}^{W}=\sigma \left(\xi,W_{s};0\leq s\leq t\right) \mbox{ for } t\in [0,\infty ),
\end{align*}
and $\mathcal{N}$ be the collection of $P$-null sets in $\mathcal{G}_{\infty}$. The augmentation filtration
\begin{align*}
\mathcal{F} _{t}&=\sigma \left( \mathcal{G}_{t}\cup \mathcal{N} \right), 0\leq t< \infty; \\[6pt]
\mathcal{F}_{\infty}&=\sigma \left( \bigcup\limits_{t\geq 0}\,\mathcal{F} _{t} \right)
\end{align*}
satisfies the usual hypothesis (i.e., it is right-continuous, and $\mathcal{F} _{0}$ contains all the $P$-null sets in $\mathcal{F}$) 
and $W$ is a Brownian motion w.r.t. $\left\{ \mathcal{F} _{t} \right\} $\cite{karatzas1991brownian}. 

We seek a strong solution to (\ref{eq1.4}). 
Let \begin{align}\label{eq4.1} 
\begin{aligned}
\mu(N,P)&=\begin{pmatrix}
N \left( 1-\dfrac{N}{k}  \right) - \dfrac{mNP}{1+N} \\[8pt] 
-cP + \dfrac{mNP}{1+N}
\end{pmatrix},  \\  
\sigma (N,P) &=\begin{pmatrix}
\left[N \left( 1+\dfrac{N}{k}  \right) + \dfrac{mNP}{1+N} \right]^{1/2} & 0 \\[8pt] 
0 & \left[cP + \dfrac{mNP}{1+N} \right]^{1/2}
\end{pmatrix}.
\end{aligned}
\end{align}
By the classical existence and uniqueness result for the strong solution of SDE, if the coefficients $\mu$ and $\sigma$ are globally Lipschitz continuous 
and satisfy the linear growth condition, then (\ref{eq1.3}) admits a unique strong solution. However, global Lipschitz continuity and the linear growth condition are sometimes too restrictive to ask for. 
When the Lipschitz continuity or linear growth condition is violated, we can still seek the existence and uniqueness of strong solutions. 

Here comes one result in this direction:
\begin{lemma}[\textbf{Existence and Uniqueness of Strong Solution, Domain Invariance}]\label{lem4.1} \cite{abundo1991stochastic,khasminskii2012stochastic} 
    Let $\left\{ K_{n} \right\} $ be an increasing sequence of open sets whose closures are contained in $U$ 
    and such that $\bigcup\limits_{n}\,K_{n}=U$, 
    or $\left\{ K_{n} \right\} $ be an increasing sequence of compact sets such that $\bigcup\limits_{n}\,K_{n}=U$,
     where $U$ is an open set of $\mathbb{R} ^{d}$. 
    Let us consider the system: \begin{align}\label{eq4.2}
    dX_{t}=\mu(t,x)dt+\sigma (t,x)dW_{t},
     \end{align}   
     where $W$ is a $d$-dimensional Brownian motion with independent components and the coefficients $\mu(t,x), \sigma (t,x)$ are Lipschitz continuous and bounded in each set $\mathbb{R} ^{+}\times K_{n}$;
     let $V:\mathbb{R} ^{+}\times U\to \mathbb{R} $ be a nonnegative function that is $C^{1}$ in $t\in \mathbb{R} ^{+}$ and $C^{2}$ in $x\in U$ such that \begin{enumerate}[label=(\roman*)]
     \item $\exists \,\, \alpha >0$ such that $LV\leq \alpha V$;     
     \item $\underset{t>0,\,x\in U \setminus K_{n}}{\inf}\, V(t,x)\to \infty $ for $n\to \infty $,  
     \end{enumerate} 
     where $L$ is the infinitesimal generator of (\ref{eq4.2}): 
     \begin{align*}
    Lf=\sum\limits_{k=1}^{d}\mu_{i}(t,x)\dfrac{\partial f}{\partial x_{i}}+\dfrac{1}{2}\sum\limits_{i,j=1}^{d}a_{ij}(t,x)\dfrac{\partial f^{2}}{\partial x_{i}\partial x_{j}}    
\end{align*} 
    for $f\in C^{2}(\mathbb{R} ^{+}\times \mathbb{R} ^{+})$, where $a(t,x)=\sigma (t,x)\sigma (t,x)^{T}$. 
     Assume for (\ref{eq3.2}) the initial condition $X_{0}=\xi$ satisfies $P(\xi \in U)=1$, 
     then there exists a unique strong solution $X_{t}$ of (\ref{eq3.2}) up to distinguishability.
     Moreover, the solution satisfies the relation: \begin{align*}
      P (X_{t}\in U)=1  \mbox{ for all } t\geq 0.
       \end{align*} 
\end{lemma}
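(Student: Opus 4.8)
The plan is to combine a localization argument with a Lyapunov supermartingale estimate, in the spirit of Khasminskii's non-explosion criterion; conditions (i) and (ii) will play complementary roles, the former producing a supermartingale and the latter forcing the exit probability to zero.

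First I would build local solutions. On each (bounded) set $K_n$ the coefficients $\mu,\sigma$ are Lipschitz and bounded, so I extend them off $K_n$ to globally Lipschitz, linearly growing coefficients $\mu^{(n)},\sigma^{(n)}$ (via a smooth cutoff equal to $1$ on $K_n$ and supported in $K_{n+1}$, or a McShane-type extension) that agree with $\mu,\sigma$ on $K_n$. The classical It\^o existence--uniqueness theorem then gives a unique global strong solution $X^{(n)}$ of $dX=\mu^{(n)}\,dt+\sigma^{(n)}\,dW$ with $X_0=\xi$. Set the exit time $\tau_n=\inf\{t\ge 0:X_t^{(n)}\notin K_n\}$. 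Since the modified coefficients coincide with the originals on $K_n$ and $K_n\subset K_{n+1}$, pathwise uniqueness forces $X^{(n)}=X^{(n+1)}$ on $[0,\tau_n]$; hence $\{\tau_n\}$ is nondecreasing and the $X^{(n)}$ paste into a single process $X$ on the stochastic interval $[0,\tau)$ with $\tau=\lim_n\tau_n$. This $X$ solves the original system up to $\tau$, and local uniqueness already yields uniqueness up to $\tau$.

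The heart of the matter is to show $\tau=\infty$ almost surely. I would apply It\^o's formula to $e^{-\alpha t}V(t,X_t)$ on $[0,t\wedge\tau_n]$. Writing $L$ for the full space--time generator $\partial_t+\cdots$ (so that the drift of $V(t,X_t)$ is $LV$), the finite-variation part of $e^{-\alpha t}V$ equals $e^{-\alpha s}(LV-\alpha V)(s,X_s)$, which is $\le 0$ by condition (i); the stochastic integral has bounded integrand up to $\tau_n$ (as $\sigma$ and $\nabla_x V$ are bounded on $K_n$), hence is a genuine martingale after stopping. Taking expectations gives
\begin{align*}
\mathbb{E}\big[e^{-\alpha(t\wedge\tau_n)}\,V(t\wedge\tau_n,X_{t\wedge\tau_n})\big]\;\le\;V(0,\xi).
\end{align*}
On $\{\tau_n\le t\}$ the stopped process lies on $U\setminus K_n$, so $V(\tau_n,X_{\tau_n})\ge v_n:=\inf_{s>0,\,x\in U\setminus K_n}V(s,x)$; retaining only this contribution and using $e^{-\alpha(t\wedge\tau_n)}\ge e^{-\alpha t}$ yields
\begin{align*}
e^{-\alpha t}\,v_n\,P(\tau_n\le t)\;\le\;V(0,\xi),
\end{align*}
so that $P(\tau_n\le t)\le e^{\alpha t}V(0,\xi)/v_n$. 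Condition (ii) gives $v_n\to\infty$, whence $P(\tau_n\le t)\to 0$; since $\{\tau_n\le t\}\downarrow\{\tau\le t\}$, we conclude $P(\tau\le t)=0$ for every $t$, i.e.\ $\tau=\infty$ a.s.

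Non-explosion promotes $X$ to a global strong solution, unique up to distinguishability. For the invariance statement, the same inequality gives $P(X_t\in U)\ge P(t<\tau_n)=1-P(\tau_n\le t)\to 1$, so $P(X_t\in U)=1$ for each fixed $t$ (and simultaneously for all $t$ along a countable dense set, by path continuity). The main obstacle will be the coupled use of (i) and (ii) inside the stopped expectation: one must guarantee that the stochastic integral is a true martingale rather than merely local, so that (i) delivers the clean supermartingale bound, and then play the decay $1/v_n$ from (ii) against the factor $e^{\alpha t}$ to send the exit probability to $0$ uniformly on compact time intervals. A secondary point is the consistent pasting of the localized solutions, which rests on pathwise uniqueness and legitimizes the definitions of $X$ and $\tau$; the compact-exhaustion case is handled identically after sandwiching a bounded open set $K_n\subset\mathrm{int}(K_{n+1})$ between consecutive compacts.
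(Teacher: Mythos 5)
Your proposal is correct and takes essentially the same route as the paper: Lemma 4.1 itself is only cited, but the paper's proof of the nearly identical Theorem 4.3 uses exactly your argument — extend/truncate the coefficients on each $K_n$ to get globally Lipschitz ones, paste the resulting local solutions via pathwise uniqueness up to the exit times $\tau_n$, apply It\^o's formula to $e^{-\alpha t}V(t,X_t)$ so that condition (i) yields the supermartingale bound $E\left[ V(\tau_n\wedge t, X_{\tau_n\wedge t})\right]\le e^{\alpha t}E\left[ V(0,\xi)\right]$, and then use condition (ii) to force $P(\tau_n\le t)\le e^{\alpha t}E\left[ V(0,\xi)\right]/v_n\to 0$. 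The one detail worth retaining from the paper's version is the passage $\inf_{U\setminus K_n}V\le\inf_{\mathrm{bd}(K_n)}V$ justified by continuity of $V$, since for a closed (or compact) $K_n$ the exit position lies on $\mathrm{bd}(K_n)$, which need not be contained in $U\setminus K_n$ as your wording assumes.
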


Before proceeding to the main theorem, we first prove an extension result for continuous functions defined 
on a closed subset of a manifold.
\begin{lemma}[Extension Lemma for Continuous Functions] \label{lemma4.2}
    Suppose $M$ is a smooth manifold wiith or without boundary, $A\subseteq M$ is a closed subset, 
    and $f:A\to \mathbb{R} ^{k}$ is a continuous function. For any open subset $U$ containing $A$,
    there exists a continuous function $\tilde{f}:M\to \mathbb{R} ^{k}$ 
    such that $\tilde{f}|_{A}=f$ and $\mathrm{supp}\tilde{f}\subseteq U $.        
\end{lemma}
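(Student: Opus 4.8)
The plan is to combine two classical tools: the Tietze extension theorem, to produce \emph{some} continuous extension of $f$ to all of $M$, and a continuous cutoff function, to force the support of the final extension into $U$. First I would record that $M$, being a smooth manifold (with or without boundary), is second countable and Hausdorff, hence metrizable, hence a normal topological space; applying the Tietze extension theorem to each of the $k$ coordinate functions of $f$ then yields a continuous map $F\colon M\to\mathbb{R}^{k}$ with $F|_{A}=f$. At this stage $F$ extends $f$ but encodes no information about where it is nonzero, so a second ingredient is needed to meet the support requirement.

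Next I would construct a continuous (indeed smooth) cutoff $\psi\colon M\to[0,1]$ with $\psi\equiv 1$ on $A$ and $\operatorname{supp}\psi\subseteq U$. Since $A\subseteq U$, the pair $\{U,\,M\setminus A\}$ is an open cover of $M$; because $M$ admits partitions of unity, I would choose a partition of unity $\{\psi_{0},\psi_{1}\}$ subordinate to this cover, so that $\operatorname{supp}\psi_{0}\subseteq U$ and $\operatorname{supp}\psi_{1}\subseteq M\setminus A$. As $\psi_{1}$ vanishes on $A$ and $\psi_{0}+\psi_{1}\equiv 1$, the function $\psi:=\psi_{0}$ satisfies $\psi\equiv 1$ on $A$ together with $\operatorname{supp}\psi\subseteq U$, which is exactly the cutoff required.

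Finally I would set $\tilde{f}:=\psi\,F$. Continuity is immediate from that of $\psi$ and $F$; on $A$ one has $\psi\equiv 1$, so $\tilde{f}|_{A}=F|_{A}=f$; and since $\{\tilde{f}\neq 0\}\subseteq\{\psi\neq 0\}$, taking closures gives $\operatorname{supp}\tilde{f}\subseteq\operatorname{supp}\psi\subseteq U$. The only genuinely nontrivial point is the support condition: the Tietze extension by itself exerts no control over where $F$ is nonzero, and the entire role of $\psi$ is to localize the extension inside $U$ while leaving its values on $A$ untouched. The step where I expect the main care to be needed is producing a cutoff whose \emph{support}—not merely its zero set, as a bare Urysohn function vanishing on $M\setminus U$ would give—lies inside $U$, which is precisely what the partition-of-unity construction guarantees.
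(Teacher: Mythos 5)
Your proposal is correct, but it follows a genuinely different route from the paper's. The paper gives no self-contained argument: it asserts that the proof of the \emph{smooth} extension lemma in Lee's book carries over verbatim, i.e., one covers $A$ by neighborhoods carrying local extensions of $f$, adjoins $M\setminus A$ to get an open cover of $M$, and glues the local extensions with a partition of unity subordinate to that cover. You instead invoke the Tietze extension theorem once, globally, to get $F$ with $F|_{A}=f$, and then localize the support with a single cutoff $\psi$; your partition-of-unity construction of $\psi$ is precisely Lee's existence result for bump functions (a smooth function equal to $1$ on the closed set $A$ with support in the open set $U$). The comparison is instructive: in the smooth category, ``$f$ is smooth on the closed set $A$'' is \emph{defined} by the existence of local smooth extensions, so Lee's gluing proof gets its local ingredient for free; for a merely continuous $f$ on $A$, continuity is an intrinsic subspace-topology notion and supplies no local extensions, so a literal transcription of Lee's proof must still manufacture them --- e.g., by applying Tietze in coordinate charts. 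Your version makes this topological input explicit and uses it exactly once, which is cleaner and in fact closes a gap that the paper's one-line justification glosses over; what the paper's route buys in exchange is structural parallelism with the smooth case, where the partition-of-unity gluing is unavoidable. One small repair to your write-up: second countability plus Hausdorffness alone does not imply metrizability --- you should also invoke regularity (which holds since a manifold is locally compact Hausdorff, or locally Euclidean) before applying the Urysohn metrization theorem, or more simply note that manifolds are paracompact Hausdorff and hence normal, which is all Tietze needs. The rest of your argument, including the inclusion $\mathrm{supp}\,\tilde{f}\subseteq\mathrm{supp}\,\psi\subseteq U$ via closures of the nonvanishing sets, is exactly right.
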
 

The proof is the same as the proof for the extension lemma for smooth functions in \cite{lee2012smooth}. 
Since in the proof of that result, the only technique used is the smooth partition of unity, and when we use the partition of unity to glue local objects, we get a global function with the same regularity as the function defined on the closed subset.

\begin{theorem}[\textbf{Existence and Uniqueness of Strong Solution, Domain Invariance}]\label{thm4.3}
    Let $\left\{ K_{n} \right\} $ be an increasing sequence of closed subsets of $U$ such that $\bigcup\limits_{n}\,K_{n}=U$, where $U$ is an open set of $\mathbb{R} ^{d}$.
    Additionally, for each $K_{n}$, there exists an open and precompact subset $\tilde{K}_{n}$ of $U$ containing $K_{n}$. 
    Let us consider the system: \begin{align}\label{eq4.3}
    dX_{t}=\mu(x)dt+\sigma (x)dW_{t},
     \end{align}   
    where $W$ is a $d$-dimensional Brownian motion with independent components 
    and the coefficients $\mu(x), \sigma (x)$ are Lipschitz continuous and bounded in each set $K_{n}$;
    let $V:\mathbb{R} ^{+}\times U\to \mathbb{R} $ be a nonnegative function that is $C^{1}$ in $t\in \mathbb{R} ^{+}$ and $C^{2}$ in $x\in U$ such that \begin{enumerate}[label=(\roman*)]
     \item $\exists \,\, \alpha >0$ such that $LV\leq \alpha V$;     
     \item $\underset{t>0,\,x\in U \setminus K_{n}}{\inf}\, V(t,x)\to \infty $ for $n\to \infty $,  
     \end{enumerate} 
     where $L$ is the infinitesimal generator of (\ref{eq4.3}): 
     \begin{align*}
    Lf=\sum\limits_{k=1}^{d}\mu_{i}(x)\dfrac{\partial f}{\partial x_{i}}+\dfrac{1}{2}\sum\limits_{i,j=1}^{d}a_{ij}(x)\dfrac{\partial f^{2}}{\partial x_{i}\partial x_{j}}    
\end{align*} 
    for $f\in C^{2}(\mathbb{R} ^{+}\times \mathbb{R} ^{+})$, where $a(x)=\sigma (x)\sigma (x)^{T}$. 
     Assume for (\ref{eq4.3}) the initial condition $X_{0}=\xi$ satisfies $P(\xi \in U)=1$, 
     then there exists a unique strong solution $X_{t}$ of (\ref{eq4.3}) up to distinguishability.
     Moreover, the solution satisfies the relation: \begin{align*}
      P (X_{t}\in U)=1  \mbox{ for all } t\geq 0.
       \end{align*} 
\end{theorem}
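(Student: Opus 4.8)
The plan is to deduce Theorem~\ref{thm4.3} from Lemma~\ref{lem4.1} by a localization argument, the one genuinely new ingredient being the passage from an exhaustion by \emph{open} (or compact) sets to an exhaustion by merely \emph{closed} sets $K_n$. The autonomy of the coefficients plays no essential role beyond letting us drop the explicit $t$-dependence in the drift and diffusion, so the heart of the matter is to manufacture, out of data $(\mu,\sigma)$ that is only assumed Lipschitz and bounded on each closed $K_n$, a sequence of globally Lipschitz and bounded coefficients to which the classical strong existence and uniqueness theorem applies.

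First I would fix $n$ and use the extension lemma (Lemma~\ref{lemma4.2}) with $M=\mathbb{R}^d$, $A=K_n$, and the open set $\tilde K_n\supseteq K_n$: the restrictions $\mu|_{K_n}$ and $\sigma|_{K_n}$ extend to continuous functions $\mu_n,\sigma_n$ on $\mathbb{R}^d$ with $\mathrm{supp}\,\mu_n,\ \mathrm{supp}\,\sigma_n\subseteq \tilde K_n$. Because $\tilde K_n$ is precompact in $U$, its closure $\overline{\tilde K_n}$ is compact and contained in $U$, so $\mu_n,\sigma_n$ have compact support; being continuous they are bounded, and the gluing by a partition of unity (as in the proof of Lemma~\ref{lemma4.2}) keeps them Lipschitz on $\overline{\tilde K_n}$ and hence globally Lipschitz on $\mathbb{R}^d$. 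By construction $\mu_n=\mu$ and $\sigma_n=\sigma$ on $K_n$. The classical theorem then yields, for $dX=\mu_n(X)\,dt+\sigma_n(X)\,dW$ with initial datum $\xi$, a unique global strong solution $X^{(n)}$.

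Next I would paste these solutions. Setting $\tau_n=\inf\{t\ge 0:\ X^{(n)}_t\notin K_n\}$, pathwise uniqueness for the globally Lipschitz equation with coefficients $\mu_{n+1},\sigma_{n+1}$, together with the equalities $\mu_{n+1}=\mu=\mu_n$ and $\sigma_{n+1}=\sigma=\sigma_n$ on $K_n\subseteq K_{n+1}$, forces $X^{(n)}=X^{(n+1)}$ on $[0,\tau_n]$; consequently $\tau_n$ is nondecreasing and the $X^{(n)}$ define a single process $X$ on the stochastic interval $[0,\zeta)$, $\zeta=\lim_n\tau_n$, solving (\ref{eq4.3}). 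To see that $\zeta=\infty$ almost surely I would invoke the Lyapunov hypotheses exactly as in the proof of Lemma~\ref{lem4.1}: applying It\^o's formula to $V$ along $X^{(n)}$ and using $LV\le\alpha V$ from (i), then localizing at $\tau_n$ and taking expectations, gives the Gronwall-type bound
\begin{align*}
\mathbb{E}\big[V(t\wedge\tau_n,\,X_{t\wedge\tau_n})\big]\ \le\ e^{\alpha t}\,\mathbb{E}\big[V(0,\xi)\big].
\end{align*}
Combined with condition (ii), $\inf_{t>0,\ x\in U\setminus K_n}V(t,x)\to\infty$, this yields $P(\tau_n\le t)\to 0$ for every fixed $t$, so $\zeta=\infty$ a.s.; this simultaneously gives the global solution and, since on $[0,\tau_n)$ the path lies in $K_n\subseteq U$, the invariance $P(X_t\in U)=1$ for all $t\ge 0$. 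Uniqueness up to indistinguishability is inherited from the pathwise uniqueness of each localized equation through the pasting.

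The step I expect to be the main obstacle is the extension in the second paragraph: Lemma~\ref{lemma4.2} as stated only guarantees continuity, and one must verify that gluing a bounded Lipschitz function off the closed set $K_n$ by a smooth partition of unity genuinely preserves the Lipschitz property—here precompactness of $\tilde K_n$ is essential, since it confines everything to the compact set $\overline{\tilde K_n}$ on which the cutoff functions and their gradients are uniformly bounded. A secondary subtlety is the exit-time bookkeeping: because $K_n$ is only closed, one must ensure that the level at which condition (ii) is read off is compatible with the set reached at $\tau_n$, which is precisely why the hypothesis is phrased through $U\setminus K_n$ and why the precompact neighborhoods $\tilde K_n$ are introduced to keep each localized solution inside $U$.
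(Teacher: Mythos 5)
Your proposal follows the paper's own proof essentially step for step: extend $(\mu,\sigma)|_{K_n}$ via Lemma~\ref{lemma4.2} to globally Lipschitz coefficients supported in $\tilde K_n$, solve the truncated equations, paste along the exit times $\tau_n$, bound $P(\tau_n\le t)$ by applying It\^o's formula to $e^{-\alpha t}V$ together with condition (ii), and inherit uniqueness from pathwise uniqueness of the truncations. One slip relative to the paper: you invoke Lemma~\ref{lemma4.2} with $M=\mathbb{R}^d$ and $A=K_n$, but $K_n$ is only assumed closed \emph{in} $U$ — in the paper's own application (Theorem~\ref{thm4.5}) the set $K_n=\{0<N\le n,\ 0<P\le n\}$ is not closed in $\mathbb{R}^2$ — so the extension must be performed on $M=U$, as the paper does; the paper also truncates the initial condition before invoking the classical theorem, which you omit.

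The genuine gap is the point you label a secondary subtlety, and your proposed resolution of it is untenable. You claim precompactness gives $\overline{\tilde K_n}$ compact and contained in $U$, which "keeps each localized solution inside $U$." But if $\overline{\tilde K_n}\subseteq U$, then $K_n$, being closed in $U$ and contained in a compact subset of $U$, is itself compact, and Theorem~\ref{thm4.3} collapses to the compact-exhaustion case of Lemma~\ref{lem4.1} — precisely the situation Remark~\ref{rmk4.4} says the theorem is meant to go beyond; moreover, in Theorem~\ref{thm4.5} the closure of $\tilde K_n$ meets $\partial U$, so this reading contradicts the intended use. Under the reading the application actually requires ($\tilde K_n$ merely bounded), nothing confines $X^{(n)}$ to $U$: since $\overline{K_n}$ may touch $\partial U$, the exit from $K_n$ can occur at a point of $\partial U$, where $V$ is not even defined and need not dominate $\inf_{t>0,\,x\in U\setminus K_n}V(t,x)$; hence "combined with condition (ii) this yields $P(\tau_n\le t)\to 0$" does not follow. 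This is not repairable bookkeeping: take $d=1$, $U=(0,\infty)$, $\mu\equiv-1$, $\sigma\equiv 0$, $K_n=(0,n]$, $\tilde K_n=(0,n+1)$, $V(t,x)=1+x^2$; then $LV=-2x\le\alpha V$ and $\inf_{x>n}V=1+n^2\to\infty$, so all hypotheses hold, yet $X_t=\xi-t$ leaves $U$ in finite time. (The same flaw sits in the paper's proof, in the asserted inequality $\inf_{\mathbb{R}^+\times(U\setminus K_n)}V\le\inf_{\mathbb{R}^+\times\mathrm{bd}(K_n)}V$, which fails when part of $\mathrm{bd}(K_n)$ lies on $\partial U$: such boundary points are limits of points of $K_n$, not of $U\setminus K_n$.) Closing the argument requires an additional hypothesis that forces exits to occur inside $U$, for instance that $V$ blow up along every sequence approaching $\partial U$.
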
 

\begin{proof}
We use the localization technique. The Lipschitz condition of $\mu$ and $\sigma$ are valid in every $K_{n}$, 
and we can construct a sequence of truncated functions $\mu_{n}$ and $\sigma _{n}$ such that for $x\in K_{n}$ \begin{align*}
\mu_{n}(x)=\mu(x),\quad \sigma_{n}(x)=\sigma (x),
 \end{align*}      
and by Lemma~\ref{lemma4.2},
for each $n$, there exists continuous functions $\tilde{\mu}_{n}$ and $\tilde{\sigma }_{n}$ defined on $U$ such that 
\begin{align*}
&\tilde{\mu }_{n}|_{K_{n}}=\mu _{n}, \quad \mathrm{supp}\,\tilde{\mu }_{n}\subseteq \tilde{K}_{n}, \\[6pt] 
&\tilde{\sigma }_{n}|_{K_{n}}=\sigma _{n},\quad \mathrm{supp}\,\tilde{\sigma }_{n}\subseteq \tilde{K}_{n}. 
 \end{align*}  
This means that we can extend $\mu _{n}$ and $\sigma _{n}$ to continuous functions on $U$ whose supports are in $\tilde{K}_{n}$, 
and we use this extension as the definition for $\mu _{n}$ and $\sigma _{n}$.  
The initial condition is truncated as follows: $\xi_{n}(\omega )=\xi(\omega )$ if $\left\lvert \xi (\omega ) \right\rvert \leq n$ and equals to $0$ otherwise. 
Since for each $n$, $\mathrm{supp}\,\tilde{\mu }_{n}, \mathrm{supp}\,\tilde{\sigma }_{n}\subseteq \tilde{K}_{n}$
and $\tilde{K}_{n}$ is precompact, it is easy to see each $\mu _{n}$ and $\sigma _{n}$ are globally Lipschitz continuous on $U$ and therefore they satisfy the linear growth condition. 
Thus, by the classical existence and uniqueness for solutions of SDEs, there exists a unique strong solution $X^{n}$ that satisfies 
\begin{align}\label{eq4.4}
X_{t}^{n}=\xi _{n}+\int_{0}^{t}\mu _{n}(X_{s}^{n})\,ds+
\int_{0}^{t}\sigma _{n}(X_{s}^{n})\,dW_{s},\quad t\geq 0.
 \end{align}  
Let $\tau _{n}=\inf\,\left\{ t\geq 0: X_{t}^{n}\not\in K_{n} \right\} $. 
It can be shown \cite{friedman1975stochastic} that for $n ^{\prime} \geq n$, 
we have $X_{t}^{n ^{\prime} }(\omega )=X_{t}^{n}(\omega )$ a.s. on $t\leq \tau _{n}(\omega )$. 
Then it is natural to define $X_{t}:=X_{t}^{n}$ for $t\in [0,\tau _{n}]$ and by letting $n\to \infty $ in (\ref{eq4.4}) 
we obtain a local solution up to the time $\tau _{\infty }:=\underset{n\to \infty }{\lim}\,\tau _{n}$. 

We next show $P(\tau _{n}=\infty )=1$, and this finishes the proof of the existence part. Let \begin{align*}
W(t,x)=V(t,x)e^{-\alpha t},
 \end{align*} 
then by condition (\romannumeral 1 ) in Theorem~\ref{thm4.3}, $\left( L+\partial _{t} \right)W\leq 0$. 
Hence, by It\^o formula and then taking the expectation, we obtain \begin{align*}
    &E \left[ V(\tau _{n}\wedge t,X_{\tau _{n}\wedge t}^{n})e^{-\alpha (\tau _{n}\wedge t)} \right]-E\left[ V(0,X_{0}^{n}) \right] \\ 
    &=E \int_{0}^{\tau _{n}\wedge t}\left( L+\partial _{t} \right)W(s,X_{s})\,du \leq 0.
 \end{align*} 
This, together with $\tau _{n}\wedge t\leq t$ and the definition of $X$ imply that \begin{align*}
EV(\tau _{n}\wedge t,X_{\tau _{n}\wedge t})\leq e^{\alpha t }EV(0,X_{0}). 
 \end{align*} 
 Then we have: \begin{align*}
     EV(\tau _{n}\wedge t,X_{\tau _{n}\wedge t})&=\int_{\tau_{n}\leq t} V(\tau_{n},X_{\tau_{n}}) d\omega +          
                                                  \int_{\tau_{n}>t}V(t,X_{t})d\omega \\
                                                &\geq \int_{\tau_{n}\leq t} V(\tau_{n},X_{\tau_{n}}) d\omega \\ 
                                                &\geq P(\tau _{n}\leq t)\underset{\mathbb{R} ^{+}\times\left(U\setminus K_{n}\right)}{\inf}\,V(s,x),
 \end{align*}
 where the last inequality follows from the following fact ($\mathrm{bd} $ means the topological boundary):
 $X_{\tau _{n}} = X_{\tau _{n}}^{n}\in \mathrm{bd}(K_{n}) $, and the continuity of $V$ implies that 
 \begin{align*}
     \underset{\mathbb{R} ^{+}\times\left(U\setminus K_{n}\right)}{\inf}\,V(s,x) \leq \underset{\mathbb{R} ^{+}\times \mathrm{bd}(K_{n})}{\inf}\,V(s,x).
 \end{align*} 
 We derive the estimate \begin{align*}
 P(\tau _{n}\leq t)\leq \dfrac{e^{\alpha t}EV(0,X_{0})}{\underset{\mathbb{R} ^{+}\times\left(\tilde{U}\setminus \tilde{K}_{n}\right)}{\inf}\,V(s,x)}, 
  \end{align*}  
  and, by the second condition (\romannumeral 2 ) of $V$, $P(\tau _{\infty }=\infty )=1$ follows by letting $n\to \infty $ in the above estimate. This solution is unique up to indistinguishability. 
  Indeed, it follows from the definition of $X_{t}$ and the uniqueness of the solution to (\ref{eq4.4}) that for every pair of solutions $X_{t}$ and $Y_{t}$ \begin{align*}
  P \left\{ \underset{0<t<\tau _{n}}{\sup}\,\left\lVert X_{t}-Y_{t} \right\rVert >0 \right\}=0. 
   \end{align*} 
   The desired result follows by letting $n\to \infty $ and using $P(\tau _{\infty }=\infty )=1$.    
\end{proof}
\begin{remark}\label{rmk4.4}
\begin{enumerate}
    \item  When proving the existence and uniqueness of solutions, many typical Lyapunov functions, such as $V(x) = \sum\limits_{i=1}^{n} \left( x_{i}-1-\log(x_{i}) \right)$ or $V(x)=\left( 1+\left\lVert x \right\rVert _{2}^{2} \right)^{\alpha } $ for some $\alpha >0$, does not satisfy (\romannumeral 2) if ${K_{n}}$ is as required in Lemma~\ref{lem4.1}. Theorem~\ref{thm4.3} allows us to use these Lyapunov functions.
    \item  The additional requirements about the existence of an open and precompact subset $\tilde{K}_{n}$ containing $K_{n}$ can be easily satisfied in most of the application scenarios.
    \end{enumerate} 
\end{remark} 

Using the above result, we can prove the following:
\begin{theorem}[\textbf{Existence and Uniqueness of Strong Solution, Domain Invariance}]\label{thm4.5}

    For the initial condition starting from $D=\left\{ (N,P)\in \mathbb{R} ^{2}:N> 0,P> 0  \right\} $, 
    (\ref{eq1.4}) admits a unique strong solution.
    Moreover, we have the invariance of $D$: 
    \begin{align*} 
    P((N(0),P(0))=\xi \in D)=1 \mbox{implies} P((N(t),P(t))\in D)=1 \mbox{ for } t\geq 0.
    \end{align*}
\end{theorem}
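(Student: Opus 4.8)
The plan is to obtain \cref{thm4.5} as a direct application of \cref{thm4.3} to the system (\ref{eq1.4}), taking the open set to be $U = D = \{(N,P) : N > 0,\ P > 0\}$ and the coefficients $\mu,\sigma$ as in (\ref{eq4.1}). First I would fix the exhaustion and its enlargements. Set
\[
K_n = \left\{(N,P) \in D : \tfrac{1}{n} \le N,\ \tfrac{1}{n} \le P,\ N + P \le n\right\},
\]
an increasing family of closed, bounded subsets of $D$ with $\bigcup_n K_n = D$, and enclose each in the open set $\tilde K_n = \{(N,P) : N > \tfrac{1}{2n},\ P > \tfrac{1}{2n},\ N + P < 2n\}$, whose closure is a compact subset of $D$, so that $\tilde K_n$ is open and precompact in $U$ as \cref{thm4.3} requires. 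On each $K_n$ the arguments of the square roots in $\sigma$ are bounded below by a positive constant, hence $\sigma$ is smooth there; together with the smoothness of $\mu$ this gives local Lipschitz continuity and boundedness of $\mu,\sigma$ on every $K_n$.

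Next I would produce the Lyapunov function, which I take to be independent of $t$ (so it is trivially $C^1$ in $t$). The natural candidate adapted to the multiplicative, population-proportional noise is
\[
V(N,P) = (N - 1 - \log N) + (P - 1 - \log P),
\]
which is $C^2$ on $D$, nonnegative since each summand attains its minimum value $0$ at argument $1$, and satisfies $V(N,P) \to \infty$ whenever $N \to 0^+$, $P \to 0^+$, or $N + P \to \infty$. Because $D \setminus K_n$ consists exactly of points near $\partial D$ or of large norm, this blow-up yields condition (ii), namely $\inf_{D\setminus K_n} V \to \infty$ as $n \to \infty$; this is precisely where \cref{thm4.3} is needed in place of \cref{lem4.1}, as anticipated in \cref{rmk4.4}.

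The heart of the proof is verifying condition (i), the inequality $LV \le \alpha V$ for some $\alpha > 0$. Using that $a = \sigma\sigma^{T}$ is diagonal with entries $a_{11} = N(1+N/k) + \frac{mNP}{1+N}$ and $a_{22} = cP + \frac{mNP}{1+N}$, and that $\partial_N V = 1 - 1/N$, $\partial_P V = 1 - 1/P$, $\partial_{NN}V = 1/N^2$, $\partial_{PP}V = 1/P^2$, I would expand $LV = \mu_1(1 - 1/N) + \mu_2(1 - 1/P) + \tfrac12\left(a_{11}/N^2 + a_{22}/P^2\right)$. The key algebraic simplification is that the predation terms $\tfrac{mNP}{1+N}$ generated by the two drift contributions cancel exactly, leaving polynomial growth terms such as $-N^2/k$ that are easily dominated by $\alpha V$ at infinity. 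The main obstacle is the boundary regime: the diffusion contributions $\tfrac{a_{11}}{2N^2}$ and $\tfrac{a_{22}}{2P^2}$ produce terms behaving like $\tfrac{1}{2N}$ and $\tfrac{c}{2P}$ as one approaches $\partial D$, whereas $V$ grows only logarithmically there, so one must show carefully that these singular contributions are absorbed into $\alpha V$, exploiting the sign of the drift near the axes; should the plain logarithmic $V$ prove insufficient to control these $1/N$ and $1/P$ singularities, the natural remedy is to augment $V$ with terms tuned to the boundary behaviour. Establishing this inequality uniformly on $D$ is the delicate step and the genuine crux of the argument.

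Once (i) and (ii) are in hand, \cref{thm4.3} applies verbatim with $\xi = (N(0),P(0))$ satisfying $P(\xi \in D) = 1$: it yields a unique strong solution of (\ref{eq1.4}) up to indistinguishability, together with the invariance $P((N(t),P(t)) \in D) = 1$ for all $t \ge 0$, which is exactly the assertion of \cref{thm4.5}.
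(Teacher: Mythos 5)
Your high-level framework coincides with the paper's---both apply \cref{thm4.3} with $U=D$ and the coefficients (\ref{eq4.1})---but your two concrete choices differ from the paper's, and the combination cannot work. The paper's exhaustion $K_{n}=\left\{ (N,P): 0< N\leq n,\ 0< P\leq n \right\}$ hugs the coordinate axes, so that $D\setminus K_{n}$ contains only points with $N>n$ or $P>n$; condition (ii) then only tests the growth of $V$ at infinity, which is exactly what lets the paper use the polynomial $V=(1+N^{2}+P^{2})^{\alpha}$, $\alpha>2$, and verify $LV\leq C_{1}V$ on all of $D$ by elementary bounds (no term of $LV$ is singular). Your interior exhaustion $K_{n}=\{1/n\leq N,\ 1/n\leq P,\ N+P\leq n\}$ instead forces condition (ii) to test $V$ near the axes, which pushes you to the logarithmic $V$. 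That $V$ does satisfy (ii), and your Lipschitz/boundedness check on $K_{n}$ is sound (indeed sounder than the paper's, since the square-root diffusion coefficient is not Lipschitz up to the axes), but now condition (i) has to hold near the axes, and it fails there.

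Concretely, fix $P=1$ and let $N\to 0^{+}$. With $V=(N-1-\log N)+(P-1-\log P)$ the drift contribution to $LV$ stays bounded,
\[
\mu_{1}\left(1-\frac{1}{N}\right)=(N-1)\left(1-\frac{N}{k}\right)-\frac{m(N-1)}{1+N}\longrightarrow m-1,
\qquad
\mu_{2}\left(1-\frac{1}{P}\right)\Big|_{P=1}=0,
\]
but the diffusion contribution is singular:
\[
\frac{a_{11}}{2N^{2}}=\frac{1+N/k}{2N}+\frac{mP}{2N(1+N)}=\frac{1+m}{2N}+O(1).
\]
Hence $LV(N,1)=\frac{1+m}{2N}+O(1)$ while $\alpha V(N,1)=\alpha\log(1/N)+O(1)$, so $LV\leq\alpha V$ fails near the axis for every fixed $\alpha>0$: the left side diverges algebraically, the right side only logarithmically. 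Neither of your proposed rescues works. ``Exploiting the sign of the drift'' is hopeless because the drift term has the bounded limit $mP-1$ there, so nothing can cancel the $1/N$ singularity; and augmenting $V$ by a more singular boundary term such as $N^{-\varepsilon}$ makes matters worse, since $L(N^{-\varepsilon})\approx\frac{\varepsilon(\varepsilon+1)(1+mP)}{2}\,N^{-\varepsilon-1}$ is more singular than $N^{-\varepsilon}$ by a factor of $1/N$. This is the structural signature of internal (demographic) noise: here $a_{11}\sim N$, so $a_{11}/N^{2}\sim 1/N$, whereas the logarithmic Lyapunov function is tailored to environmental noise with $\sigma\sim N$, for which $a_{11}/N^{2}$ is bounded. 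In fact, near the axis the prey equation is a Feller-type branching diffusion, $dN\approx N(1-mP)\,dt+\sqrt{(1+mP)N}\,dB^{1}$, for which the level $N=0$ is accessible; since any $V$ satisfying (i) and (ii) relative to an interior exhaustion would, by the very argument of \cref{thm4.3}, certify inaccessibility of the axes, no choice of Lyapunov function can complete your argument (modulo a standard comparison argument). The failure you flagged as ``the delicate step'' is therefore irreparable within your setup: to follow the paper one must let the $K_{n}$ reach the axes, so that condition (ii) never sees the boundary, and use a Lyapunov function whose generator has no boundary singularity, exactly as the paper's polynomial choice does.
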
 
\begin{proof}  
    Let 
    \begin{align*}
    K_{n}&=\left\{ (N,P)\in \mathbb{R} ^{2}: 0< N\leq n, 0< P\leq n \right\}, \\ 
    \tilde{K}_{n}&= \left\{ (N,P)\in \mathbb{R} ^{2}: 0< N <n+1, 0< P<n+1 \right\},
     \end{align*} 
    and \begin{align*}
    V(N,P)=\left( 1+N^{2}+P^{2} \right)^{\alpha }, 
     \end{align*} 
    where $\alpha >0$ is some positive number to be determined later. 
    We denote $U(N,P)= 1+N^{2}+P^{2}$ so that $V= U^{\alpha }$. 
    $\mu$ and $\sigma $ have continuous first-order partial derivatives, satisfying the Lipschitz and linear growth conditions in each $K_{n}$.
    It can be readily checked that \begin{align*}
    LV
    &=\underbrace{\left( 2 \alpha N U(N,P)^{\alpha -1}\right)   \left( N \left( 1-\dfrac{N}{k} -\dfrac{mP}{1+N}   \right) \right)}_{I_{1}} \\
    &+\underbrace{\left( 2 \alpha P U(N,P)^{\alpha -1}\right) \left( P \left( -c+\dfrac{mN}{1+N}  \right)\right)}_{I_{2}}\\ 
       &+\underbrace{\alpha U(N,P)^{\alpha -1}\left( N \left( 1+\dfrac{N}{k}+\dfrac{mP}{1+N}   \right) \right)}_{I_{3}} \\
       &+\underbrace{2 \alpha (\alpha -1)N^{2} U(N,P)^{\alpha -2}\left( N \left( 1+\dfrac{N}{k}+\dfrac{mP}{1+N}   \right) \right)}_{I_{4}} \\ 
       &+\underbrace{\alpha U(N,P)^{\alpha -1}\left(P \left( c+\dfrac{mN}{1+N}  \right)\right)}_{I_{5}} \\
       &+\underbrace{2 \alpha (\alpha -1)P^{2} U(N,P)^{\alpha -2}\left(P \left( c+\dfrac{mN}{1+N}  \right)\right)}_{I_{6}}. 
     \end{align*}       
For $I_{1}$:
\begin{align*}
I_{1}\leq 2 \alpha N^{2} U(N,P)^{\alpha -1}\leq 2\alpha V(N,P).
 \end{align*}  
 For $I_{2}$:
 \begin{align*}
 I_{2}\leq 2\alpha m P^{2}U(N,P)^{\alpha -1}\leq 2\alpha m V(N,P).
  \end{align*}  
  For $I_{3}$:
   \begin{align*}
   I_{3} = \underbrace{\alpha  U(N,P)^{\alpha -1}N}_{I_{3,1}}
   + \underbrace{\alpha U(N,P)^{\alpha -1}\dfrac{N^{2}}{k} }_{I_{3,2}}
   +\underbrace{\alpha U(N,P)^{\alpha -1}\dfrac{mP}{1+N} }_{I_{3,3}}
    \end{align*} 
    For $I_{3,1}$, by Young inequality, we have $N\leq \dfrac{1+N^{2}}{2} $
    and 
    \begin{align*}
    I_{3,1}\leq \dfrac{\alpha }{2}U(N,P)^{\alpha -1}(1+N^{2}) \leq \dfrac{\alpha }{2}V(N,P). 
     \end{align*}    
    For $I_{3,2}$:
    \begin{align*}
    I_{3,2}\leq \dfrac{\alpha }{k}V(N,P). 
     \end{align*}  
     For $I_{3,3}$:
     \begin{align*}
     I_{3,3}\leq \alpha U(N,P)^{\alpha -1}P\leq \dfrac{\alpha }{2} U(N,P)^{\alpha -1}(1+P^{2}) \leq \dfrac{\alpha }{2}V(N,P).
      \end{align*} 
      For $I_{4}$:
      \begin{align*}
      I_{4} &= \underbrace{2 \alpha (\alpha -1)N^{2} U(N,P)^{\alpha -2}N}_{I_{4,1}}
      +\underbrace{2 \alpha (\alpha -1)N^{2} U(N,P)^{\alpha -2}\dfrac{N^{2}}{k}}_{I_{4,2}} \\
      &+\underbrace{2 \alpha (\alpha -1)N^{2} U(N,P)^{\alpha -2}\dfrac{mP}{1+N}}_{I_{4,3}}.
       \end{align*}  
    We have \begin{align*}
    I_{4,1}&\leq \alpha \left( \alpha -1 \right)V(N,P),\\
    I_{4,2}&\leq \dfrac{2\alpha (\alpha -1)}{k}V(N,P), \\ 
    I_{4,3}&\leq \alpha (\alpha -1)m V(N,P). 
     \end{align*} 
     For $I_{5}$:
     \begin{align*}
     I_{5} = \underbrace{\alpha U(N,P)^{\alpha -1}cP}_{I_{5,1}}
     +\underbrace{\alpha U(N,P)^{\alpha -1}P\dfrac{mN}{1+N}}_{I_{5,2}}.
      \end{align*}  
      We have \begin{align*}
      I_{5,1}&\leq \dfrac{\alpha c}{2}V(N,P), \\ 
      I_{5,2}&\leq \dfrac{\alpha m}{2}V(N,P). 
       \end{align*} 
       For $I_{6}$:
        \begin{align*}
        I_{6}=\underbrace{2 \alpha (\alpha -1)P^{2} U(N,P)^{\alpha -2}cP}_{I_{6,1}}
        +\underbrace{2 \alpha (\alpha -1)P^{2} U(N,P)^{\alpha -2}P \dfrac{mN}{1+N} }_{I_{6,2}}.
         \end{align*} 
         We have \begin{align*}
         I_{6,1}&\leq \alpha (\alpha -1)c V(N,P), \\
         I_{6,2}&\leq \alpha (\alpha -1)m V(N,P).
          \end{align*} 
    Choose $\alpha >2$, this lower bound for $\alpha $ suffices in the above derivations, and let \begin{align*} 
        C_{1}=3\alpha+ \dfrac{5\alpha m}{2}+ \dfrac{\alpha c}{2}+\dfrac{\alpha }{k}+\alpha (\alpha -1)\left( 1+\dfrac{2}{k}+2m+c  \right).    
    \end{align*}
    Then, \begin{align*}
     LV\leq C_{1}V.
      \end{align*} 
    It is easy to see that $\underset{t>0,\,(N,P)\in D \setminus K_{n}}{\inf}\, V(N,P) = \left( n^{2}+1 \right)^{\alpha } \to \infty $ as $n\to \infty $. 
    Theorem~\ref{thm4.3} implies the existence and uniqueness of the strong solution $X_{t}$ to (\ref{eq1.3}) up to indistinguishability and the invariance of $D$.    
\end{proof}

\begin{remark}[\textbf{Significance of the Domain Invariance of $D$}]\label{rmk4.6}

The importance of the invariance of $D=\left\{ (N,P):N,P>0 \right\} $ can be emphasized in the following two aspects:
    \begin{enumerate}[label=(\roman*)]
    \item The invariance of the domain $D$ is obvious in the biological settings since $N$ and $P$ represent the density of the prey and the predator, which can only be nonnegative, and the nonnegativity is sometimes implicitly assumed and incorporated in the model by some authors. Here we give a rigorous proof of the nonnegativity.
    \item The invariance of $D$ implies that the behavior of $\mu$ and $\sigma $ outside of $D$ will not influence the behavior of the solution, so we can extend many results that require the behaviors of $\mu $ and $\sigma $ on the whole domain $\mathbb{R} ^{+}\times \mathbb{R} ^{d}$ to the result that only requires the behavior of $\mu $ and $\sigma $ on the domain $\mathbb{R} ^{+}\times D$ of our interest.
    \end{enumerate}
\end{remark}     

There are two statuses of the predator and prey populations: extinction (the density of the population becomes zero) and persistence (the density of the population remains positive).  
Since $0$ is not in $D$, this invariance excludes the possibility of population extinction. 

We know external (environmental) randomness results in destabilizing effects and leads to the extinction of the population \cite{feldman1975population,may2019stability}. 
Our predator and prey system only incorporates internal randomness in the population, and the nonzero initial data can guarantee the global-in-time persistence of the population (the population persists for all positive time). Therefore, the internal randomness does not have the same destabilizing effects and leads to population extinction as the external randomness.

We state some results about the $p$-th moment estimates of the solution and then apply them to our system (\ref{eq3.2}). 
\begin{lemma}[\textbf{$L^{p}$ Estimate}]\label{lem4.7} \cite{mao2007stochastic}
    \begin{enumerate}[label= (\roman*)]
    \item Let $0<p<2$ and $X_{t_{0}}\in L^{2}(\Omega ;\mathbb{R} ^{d})$. Assume that there exists a constant $\alpha >0$ such that for all $(t,x)\in [t_{0},\infty )\times \mathbb{R} ^{d}$, 
    \begin{align}\label{eq4.5}
        x^{T}\mu(t,x)+\dfrac{1}{2}\left\lVert \sigma (t,x) \right\rVert ^{2}\leq \alpha (1+ \left\lVert x \right\rVert ^{2}). 
     \end{align}    
     Then \begin{align*}
        E \left\lVert X_{t} \right\rVert ^{p}\leq \left( 1+E \left\lVert X_{t_{0}}  \right\rVert ^{2} \right)^{p/2}e^{p \alpha (t-t_{0})} \mbox{ for } t\geq t_{0}. 
      \end{align*} 
    \item Let $p\geq 2$ and $X_{t_{0}}\in L^{p}(\Omega ;\mathbb{R} ^{d})$. Assume that there exists a constant $\alpha >0$ such that for all $(t,x)\in [t_{0},\infty )\times \mathbb{R} ^{d}$, 
    \begin{align*}
    x^{T}\mu(t,x)+\dfrac{p-1}{2}\left\lVert \sigma (t,x) \right\rVert ^{2}\leq \alpha (1+ \left\lVert x \right\rVert ^{2}). 
    \end{align*}   
    Then \begin{align*}
    E \left\lVert X_{t} \right\rVert ^{p}\leq 2^{(p-2)/2} \left( 1+E \left\lVert X_{t_{0}}  \right\rVert ^{p} \right)e^{p \alpha (t-t_{0})} \mbox{ for } t\geq t_{0}. 
     \end{align*}  
    \end{enumerate}
\end{lemma}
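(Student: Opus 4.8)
The plan is to apply It\^o's formula to the auxiliary function $V(x)=\left(1+\left\lVert x \right\rVert^{2}\right)^{p/2}$ and to reduce both parts to the single differential inequality $LV\leq p\alpha V$, after which a localization-plus-Gronwall argument yields the exponential bound. First I would compute the action of the generator $L$ (built from the time-dependent coefficients $\mu(s,\cdot),\sigma(s,\cdot)$) on $V$. Writing $a=\sigma\sigma^{T}$ and $\left\lVert \sigma \right\rVert^{2}=\mathrm{trace}(a)$, the gradient is $\nabla V=p\left(1+\left\lVert x \right\rVert^{2}\right)^{p/2-1}x$ and the Hessian has entries $p\left(1+\left\lVert x \right\rVert^{2}\right)^{p/2-1}\delta_{ij}+p(p-2)\left(1+\left\lVert x \right\rVert^{2}\right)^{p/2-2}x_{i}x_{j}$, so that
\begin{align*}
LV=p\left(1+\left\lVert x \right\rVert^{2}\right)^{p/2-1}\left[x^{T}\mu+\tfrac{1}{2}\left\lVert \sigma \right\rVert^{2}\right]+\tfrac{p(p-2)}{2}\left(1+\left\lVert x \right\rVert^{2}\right)^{p/2-2}\left\lVert \sigma^{T}x \right\rVert^{2}.
\end{align*}

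This identity is the heart of the argument, and the two hypotheses are precisely tailored to its second term. When $0<p<2$ the factor $p-2$ is negative while $\left\lVert \sigma^{T}x \right\rVert^{2}\geq 0$, so the second term may simply be discarded; the surviving first term together with hypothesis (\ref{eq4.5}) gives $LV\leq p\alpha\left(1+\left\lVert x \right\rVert^{2}\right)^{p/2}=p\alpha V$. When $p\geq 2$ the second term is nonnegative, so instead I would bound it using $\left\lVert \sigma^{T}x \right\rVert^{2}\leq \left\lVert \sigma \right\rVert^{2}\left\lVert x \right\rVert^{2}\leq \left\lVert \sigma \right\rVert^{2}\left(1+\left\lVert x \right\rVert^{2}\right)$; after collecting the two $\left\lVert \sigma \right\rVert^{2}$ contributions the coefficient becomes $\tfrac{p(p-1)}{2}$, and the second hypothesis again delivers $LV\leq p\alpha V$. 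Thus both regimes funnel into the same inequality, with the sign of $p-2$ being the only place the two cases diverge.

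With $LV\leq p\alpha V$ in hand, I would show $E[V(X_{t})]\leq e^{p\alpha(t-t_{0})}E[V(X_{t_{0}})]$. Applying It\^o's formula to $e^{-p\alpha(s-t_{0})}V(X_{s})$ and stopping at $\tau_{R}=\inf\{s:\left\lVert X_{s} \right\rVert\geq R\}$ turns the stochastic integral into a genuine (bounded) martingale of zero mean, while the drift $e^{-p\alpha(s-t_{0})}(LV-p\alpha V)\leq 0$; taking expectations gives $E[e^{-p\alpha(t\wedge\tau_{R}-t_{0})}V(X_{t\wedge\tau_{R}})]\leq E[V(X_{t_{0}})]$. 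Since the solution does not explode, $\tau_{R}\uparrow\infty$ almost surely, and Fatou's lemma passes to the limit $R\to\infty$. Finally I would convert this estimate on $V$ into the stated norm bounds: on the left $\left\lVert X_{t} \right\rVert^{p}\leq V(X_{t})$ in both cases; on the right, for $0<p<2$ concavity of $u\mapsto u^{p/2}$ and Jensen's inequality give $E[(1+\left\lVert X_{t_{0}} \right\rVert^{2})^{p/2}]\leq(1+E\left\lVert X_{t_{0}} \right\rVert^{2})^{p/2}$, whereas for $p\geq 2$ the convexity estimate $(1+u)^{p/2}\leq 2^{p/2-1}(1+u^{p/2})$ produces exactly the constant $2^{(p-2)/2}$.

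The main obstacle is the generator computation together with the careful separation of the two sign regimes in its second term, since this is where the two distinct hypotheses and the two distinct final constants originate; everything hinges on the clean factorization displayed above. The localization step is routine given the non-explosion supplied by the well-posedness framework, and the final conversions are elementary convexity estimates.
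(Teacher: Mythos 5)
Your proof is correct, and there is nothing to compare it against inside the paper itself: Lemma~\ref{lem4.7} is stated there without proof, quoted directly from the cited monograph \cite{mao2007stochastic}. Your argument — It\^o's formula applied to $V(x)=\left(1+\left\lVert x\right\rVert^{2}\right)^{p/2}$, discarding the Hessian term when $p<2$ and absorbing it via $\left\lVert \sigma^{T}x\right\rVert^{2}\leq \left\lVert \sigma \right\rVert^{2}\left(1+\left\lVert x\right\rVert^{2}\right)$ to produce the coefficient $\tfrac{p-1}{2}$ when $p\geq 2$, then localization, the exponential-weight (Gronwall) step, and the final Jensen/convexity conversions yielding the constants $\left(1+E\left\lVert X_{t_{0}}\right\rVert^{2}\right)^{p/2}$ and $2^{(p-2)/2}$ — is essentially the standard proof given in that reference.
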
 
The condition (\ref{eq4.5}) is called the monotonicity condition, and this condition, together with the local Lipschitz continuous condition 
can imply the existence and uniqueness of the strong solution \cite{mao2007stochastic}. 
Lemma~\ref{lem4.7} give the $L^{p}$ estimate of the solution when $p< 2$ and $p\geq 2$.  

\begin{proposition}[\textbf{$L^{p}$ Estimate}]\label{prop4.8}
    For the initial condition starting from $D=\left\{ (N,P)\in \mathbb{R} ^{2}:N> 0,P> 0  \right\} $ 
    and belong to $L^{2}(\Omega ;\mathbb{R} ^{d})$, 
    the unique strong solution to (\ref{eq1.4}) have the following $p$-th moment estimates: 
    \begin{align*}
        &E \left\lVert X_{t} \right\rVert ^{p}\leq \left( 1+E \left\lVert \xi  \right\rVert ^{2} \right)^{p/2}e^{p C_{1}t} \mbox{ for } 0<p<2, t\geq 0, \\ 
        &E \left\lVert X_{t} \right\rVert ^{p}\leq 2^{(p-2)/2} \left( 1+E \left\lVert \xi  \right\rVert ^{p} \right)e^{p C_{2}t} \mbox{ for } p\geq 2, t\geq 0.
     \end{align*} 
     where $C_{1}=\dfrac{5+6m+c}{4}+\dfrac{1}{2k}  $ and $C_{2}=1+m+\dfrac{p-1}{4}\left( 1+2m+c \right)+\dfrac{p-1}{2k}$. 
\end{proposition}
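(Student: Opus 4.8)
The plan is to apply Lemma~\ref{lem4.7} directly, treating the two cases $0<p<2$ and $p\geq 2$ separately, since they correspond to parts (i) and (ii) of that lemma. In each case the only thing to check is the monotonicity condition, and the constants $C_1$ and $C_2$ in the statement are precisely the values of $\alpha$ that fall out of that verification. Writing $x=(N,P)$ so that $\left\lVert x \right\rVert^2=N^2+P^2$, I would first record the two quantities
\begin{align*}
x^T\mu(N,P) &= N^2-\frac{N^3}{k}-\frac{mN^2P}{1+N}-cP^2+\frac{mNP^2}{1+N}, \\
\left\lVert \sigma(N,P) \right\rVert^2 &= N\Big(1+\frac{N}{k}\Big)+cP+\frac{2mNP}{1+N},
\end{align*}
where $\left\lVert \sigma \right\rVert^2$ is the Frobenius norm of the diagonal matrix in (\ref{eq4.1}).

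Second, I would form $x^T\mu+\tfrac12\left\lVert \sigma \right\rVert^2$ for part (i) and discard the three manifestly nonpositive terms $-N^3/k$, $-mN^2P/(1+N)$ and $-cP^2$, which only helps the upper bound. Each surviving term is then dominated by a multiple of $1+N^2+P^2$ using only two elementary facts: Young's inequality in the forms $N\leq\tfrac12(1+N^2)$ and $P\leq\tfrac12(1+P^2)$, together with the monotone bound $N/(1+N)<1$ valid for $N>0$; for instance $mNP^2/(1+N)\leq mP^2$ and $\tfrac12\cdot 2mNP/(1+N)\leq mP\leq\tfrac{m}{2}(1+P^2)$. Collecting coefficients gives exactly $C_1=\tfrac{5+6m+c}{4}+\tfrac{1}{2k}$. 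For part (ii) the computation is identical except that $\left\lVert \sigma \right\rVert^2$ carries the weight $\tfrac{p-1}{2}$ rather than $\tfrac12$, so every contribution coming from $\left\lVert \sigma \right\rVert^2$ picks up a factor $p-1$; collecting coefficients then yields $C_2=1+m+\tfrac{p-1}{4}(1+2m+c)+\tfrac{p-1}{2k}$. Applying Lemma~\ref{lem4.7}(i) with $\alpha=C_1$ (using $\xi\in L^2$) and Lemma~\ref{lem4.7}(ii) with $\alpha=C_2$ (which additionally requires $\xi\in L^p$) then produces the two stated estimates verbatim.

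The one genuine subtlety — and the step I would be most careful about — is that Lemma~\ref{lem4.7} asks for the monotonicity inequality to hold for all $x\in\mathbb{R}^d$, whereas the coefficients $\mu,\sigma$ in (\ref{eq4.1}) are only naturally defined on $D=\{N,P>0\}$, since the square roots in $\sigma$ require the radicands to be nonnegative. Here I would invoke the invariance of $D$ established in Theorem~\ref{thm4.5}: because $P((N(t),P(t))\in D)=1$ for all $t\geq 0$, the It\^o--Gronwall argument underlying Lemma~\ref{lem4.7} only ever evaluates the coefficients along a trajectory that remains in $D$, so it suffices to verify the bound on $D$, which is exactly where $N/(1+N)<1$ and positivity of the radicands are available. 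Equivalently, one may extend $\mu,\sigma$ off $D$ as in the proof of Theorem~\ref{thm4.3} without affecting the law of the solution. Reconciling the global hypothesis of Lemma~\ref{lem4.7} with the constrained domain is the main obstacle; once it is in place, the remaining work is the routine Young-inequality bookkeeping described above.
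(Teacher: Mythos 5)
Your proposal is correct and takes essentially the same route as the paper: the paper's proof also verifies the monotonicity condition (\ref{eq4.5}) by computing $x^{T}\mu+\tfrac{1}{2}\left\lVert \sigma \right\rVert^{2}$, discarding the nonpositive terms $-N^{3}/k$, $-mN^{2}P/(1+N)$, $-cP^{2}$, using $N/(1+N)<1$ and Young's inequality to reach $C_{1}\left(1+\left\lVert x \right\rVert^{2}\right)$, and then invoking Lemma~\ref{lem4.7} (it only writes out the $C_{1}$ case, declaring the $C_{2}$ case similar, exactly as you describe). The domain subtlety you carefully address is handled in the paper not inside the proof but via Remark~\ref{rmk4.6}(ii) and the invariance from Theorem~\ref{thm4.5}, so on that point your write-up is simply more explicit than the paper's.
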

\begin{proof}
    We only derive the first estimate involving $C_{1}$, and the derivation of the other is similar. \begin{align*}
    &x^{T}\mu(x)+\dfrac{1}{2}\left\lVert \sigma (x) \right\rVert ^{2}  \\ 
    =& x^{2}\left( 1-\dfrac{x}{k}  \right)-\dfrac{mx^{2}y}{1+x}-cy^{2}+\dfrac{mxy^{2}}{1+x} \\ 
     &+ \dfrac{1}{2}x \left( 1+\dfrac{x}{k}+\dfrac{my}{1+x}   \right)+\dfrac{1}{2}y \left( c+\dfrac{mx}{1+x}  \right)  \\ 
     &\leq  x^{2}+my^{2}+\dfrac{x}{2}+\dfrac{x^{2}}{2k}+\dfrac{my}{2}+\dfrac{cy}{2}+\dfrac{my}{2} \\ 
     &\leq \left(\dfrac{5+6m+c}{4}+\dfrac{1}{2k}\right) \left( 1+\left\lVert x \right\rVert _{2}^{2} \right)      
     \end{align*}  
\end{proof} 
Proposition~\ref{prop4.8} shows that the $p$-th moment of the solution will grow at most exponentially with exponent $pC_{1}$ when $p< 2$ and with exponent $pC_{2}$ when $p\geq 2$. In other words, \begin{align*}
\underset{t\to \infty }{\limsup}\, \dfrac{1}{t}\log \left( E \left\lVert X_{t} \right\rVert ^{p} \right)\leq pC_{i}.
 \end{align*}     
The quantity on the left-hand side is called the $p$-th moment Lyapunov exponent and should not exceed $pC_{i}$. We proceed to give an estimate of the Lyapunov exponent that is defined to be \begin{align*}
\underset{t\to \infty }{\limsup}\,\dfrac{1}{t}\log \left\lVert X_{t} \right\rVert . 
 \end{align*}   
\begin{theorem}[\textbf{Asymptotic Estimamte}]\label{thm4.9} \cite{mao2007stochastic}

   The Lyapunov exponent of the unique strong solution to (\ref{eq1.4}) should not exceed $C_{1}=\dfrac{5+6m+c}{4}+\dfrac{1}{2k} $, that is \begin{align*}
   \underset{t\to \infty }{\limsup}\, \dfrac{1}{t}\log \left\lVert X_{t} \right\rVert \leq C_{1} \quad a.s. 
    \end{align*} 
\end{theorem}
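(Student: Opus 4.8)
The plan is to establish the almost-sure (pathwise) bound by the classical exponential growth argument of \cite{mao2007stochastic}: apply It\^o's formula to $V(x)=\log\!\left(1+\left\lVert x\right\rVert^2\right)$, control the drift using the monotonicity estimate already obtained in Proposition~\ref{prop4.8}, and then tame the resulting martingale term \emph{pathwise} via the exponential martingale inequality together with the Borel--Cantelli lemma. First I would compute, using $\nabla V(x)=\tfrac{2x}{1+\left\lVert x\right\rVert^2}$ and the Hessian contraction with $a=\sigma\sigma^{T}$, that
\begin{align*}
V(X_t) = V(X_0) + \int_0^t \left[ \frac{2X_s^{T}\mu(X_s)+\left\lVert \sigma(X_s)\right\rVert^2}{1+\left\lVert X_s\right\rVert^2} - \frac{2\left\lVert \sigma(X_s)^{T} X_s\right\rVert^2}{\left(1+\left\lVert X_s\right\rVert^2\right)^2}\right]ds + M_t,
\end{align*}
where $M_t=\int_0^t \tfrac{2 X_s^{T}\sigma(X_s)}{1+\left\lVert X_s\right\rVert^2}\,dW_s$ is a continuous local martingale with $\langle M\rangle_t=\int_0^t \tfrac{4\left\lVert \sigma(X_s)^{T} X_s\right\rVert^2}{\left(1+\left\lVert X_s\right\rVert^2\right)^2}\,ds$.

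Because $D$ is invariant (Theorem~\ref{thm4.5}), the trajectory $X_s$ stays in $D$ for all $s$, so the monotonicity bound $X^{T}\mu+\tfrac12\left\lVert\sigma\right\rVert^2\leq C_1\!\left(1+\left\lVert X\right\rVert^2\right)$ proved inside Proposition~\ref{prop4.8} applies along the path. This bounds the first term of the integrand by $2C_1$, while the second term is exactly $-\tfrac12\tfrac{d}{ds}\langle M\rangle_s$; hence
\begin{align*}
V(X_t) \leq V(X_0) + 2C_1\, t + M_t - \tfrac12\langle M\rangle_t \qquad \text{for all } t\geq 0,\ \text{a.s.}
\end{align*}

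The key step is to control $M_t-\tfrac12\langle M\rangle_t$ uniformly over large times. I would invoke the exponential martingale inequality with parameters $\alpha=1$ and $\beta=2\log n$: for every integer $n\geq 1$,
\begin{align*}
P\!\left( \underset{0\leq t\leq n}{\sup}\,\left[ M_t - \tfrac12\langle M\rangle_t\right] > 2\log n\right) \leq e^{-2\log n} = \frac{1}{n^2}.
\end{align*}
Since $\sum_n n^{-2}<\infty$, Borel--Cantelli yields a random $n_0(\omega)$ with $M_t-\tfrac12\langle M\rangle_t\leq 2\log n$ for all $0\leq t\leq n$ whenever $n\geq n_0$. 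Combining with the previous display, for $n-1\leq t\leq n$ and $n\geq n_0$,
\begin{align*}
\frac{1}{t}\log\!\left(1+\left\lVert X_t\right\rVert^2\right) \leq \frac{V(X_0)}{n-1} + 2C_1 + \frac{2\log n}{n-1}.
\end{align*}

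Letting $t\to\infty$ (equivalently $n\to\infty$) gives $\underset{t\to\infty}{\limsup}\,\tfrac1t\log\!\left(1+\left\lVert X_t\right\rVert^2\right)\leq 2C_1$ almost surely, and since $\log\left\lVert X_t\right\rVert=\tfrac12\log\left\lVert X_t\right\rVert^2\leq \tfrac12\log\!\left(1+\left\lVert X_t\right\rVert^2\right)$ (valid as $X_t\in D$), dividing by $t$ and passing to the limit superior produces the claimed $\underset{t\to\infty}{\limsup}\,\tfrac1t\log\left\lVert X_t\right\rVert\leq C_1$ a.s. The main obstacle is precisely the passage from an in-expectation estimate, which would only bound the moment Lyapunov exponents as in Proposition~\ref{prop4.8}, to a pathwise almost-sure statement; the exponential martingale inequality applied along the integer mesh, followed by Borel--Cantelli, is exactly what accomplishes this, at the harmless cost of the logarithmic correction $2\log n$ that disappears after division by $t$.
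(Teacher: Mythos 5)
Your argument is correct and coincides with the paper's approach: the paper gives no independent proof of Theorem~\ref{thm4.9}, but cites \cite{mao2007stochastic} after verifying, inside the proof of Proposition~\ref{prop4.8}, exactly the monotonicity bound $x^{T}\mu(x)+\tfrac12\lVert\sigma(x)\rVert^{2}\leq C_{1}\left(1+\lVert x\rVert^{2}\right)$ that your proof takes as its key input, and the chain you wrote out (It\^o's formula applied to $\log\left(1+\lVert x\rVert^{2}\right)$, the exponential martingale inequality along the integer mesh, Borel--Cantelli) is precisely the standard proof of the cited theorem from Mao's book. Your explicit appeal to the invariance of $D$ (Theorem~\ref{thm4.5}) to justify applying the bound only along trajectories staying in $D$ is the same point the paper records in Remark~\ref{rmk4.6}, so nothing is missing.
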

The asymptotic estimate presented above indicates that the population will exhibit at most exponential growth with the exponent given by $C_{1}=\dfrac{5+6m+c}{4}+\dfrac{1}{2k} $. 
This conclusion aligns with the deterministic ecological model analysis, which suggests that populations typically experience exponential growth. However, in a more realistic environment, growth is constrained by limited resources, leading to a transition from exponential growth to a more stable growth limited by the carrying capacity of the ecosystem.
To be more precise, in an ideal environment with sufficient resources (such as food), a population will experience exponential growth. Conversely, in a more realistic ecosystem,
the population will undergo limited growth, with the maximum population number not exceeding the environmental carrying capacity due to resource competition. 

In this stochastic version of the Rosenzweig-MacArthur predator-prey model, we incorporate internal randomness arising from fluctuations in growth, reproduction, and deaths of individuals in the population. The asymptotic estimate reveals that this stochastic model, which includes internal population fluctuations, does not significantly deviate from the corresponding deterministic models. Both systems predict that population growth will be at most exponential.

\section{Numerical Results}
\label{sec:numerical}

In this section, we deal with the numerical solution of (\ref{eq1.3}) 
with initial condition $X_{0}=(N(0),P(0))=(N_{0},P_{0})$. 
For the general SDE of the form \begin{align}\label{eq5.1}
dX_{t}=\mu (x,t)dt+\sigma (x,t)dW_{t}, 
 \end{align}   
one simple time discrete approximation of (\ref{eq5.1}) is the Euler approximation or the Euler-Maruyama approximation. 
We assume (\ref{eq5.1}) has a unique strong solution on $[0,\infty )$, and in numerical simulations we are primarily interested in the behavior of the solution up to some terminal time $T>0$. 
For simplicity, we adopt the equidistant time discretization $0=\tau _{0}<\tau _{1}<\ldots <\tau _{i}<\ldots <\tau _{M}=T$, with $\Delta =\dfrac{T}{M}$ and $\tau _{i}=i\Delta=\dfrac{iT}{M}$. 
Then an Euler approximation under this equidistant time discretization is a continuous time stochastic process $Y=\left\{ Y_{t}:0\leq t\leq T \right\} $ satisfying the iterative scheme \begin{align*}
Y_{n+1}=Y_{n}+\mu (\tau _{n},Y_{n})\Delta +\sigma (\tau _{n},Y_{n})\left( W_{\tau _{n+1}}-W_{\tau _{n}} \right) ,
 \end{align*}  
for $n=0,1,\ldots ,N-1$ with initial value \begin{align*}
Y_{0}=X_{0},
 \end{align*}  
where we have written \begin{align*}
Y_{n}=Y_{\tau _{n}}
 \end{align*} 
for the value of the approximation at the discretization time $\tau _{n}$ \cite{kloeden1992stochastic}. We also denote the increments of the Brownian motion by $\Delta W_{n}:=W_{\tau _{n+1}}-W_{\tau _{n}}$,
for $n=0,1,\ldots ,M-1$. These increments are independent Gaussian random variables with mean \begin{align*}
\mathrm{E} \left( \Delta W_{n} \right)=0, 
 \end{align*}  
 and variance \begin{align*}
 \mathrm{Var} \left( \Delta W_{n} \right)=\Delta =\dfrac{T}{M}.  
  \end{align*} 
Now, the Euler scheme for (\ref{eq1.4}) is as follows: \begin{equation}
    \begin{split}
&N_{n+1}=N_{n}+N_{n}\left( 1-\dfrac{N_{n}}{k} -\dfrac{mP_{n}}{1+N_{n}}\right) \Delta+ \left[ N_{n}\left( 1+\dfrac{N_{n}}{k} +\dfrac{mP_{n}}{1+N_{n}}\right)   \right]^{1/2}\Delta W_{n}^{1} \\
&P_{n+1}=P_{n}+P_{n}\left( -c+\dfrac{mN_{n}}{1+N_{n}}  \right)\Delta + \left[ P_{n} \left( c+\dfrac{mN_{n}}{1+N_{n}}  \right)  \right]^{1/2}\Delta W_{n}^{2},  \\
&N_{0}=N(0), \\ 
&P_{0}=P(0),
    \end{split}
 \end{equation} 
for $n=0,1,\ldots ,M-1$. 

We fix the initial condition $N(0)=1, P(0)=0.6$ once for all. For the corresponding deterministic predator-prey system (\ref{eq1.3}), we have calculated all three critical points 
$K_{1}=(0,0)$, $K_{2}=(k,0)$, 
and $K_{3}=\left(\dfrac{c}{m-c},\dfrac{k(m-c)-c}{k(m-c)^{2}}\right) $,
and identified two parameter regions for $m,c,k$: \begin{align*}
    &\Lambda _{1}=\left\{ (m,c,k):k>\dfrac{m+c}{m-c} \right\},\\  
    &\Lambda _{2}=\left\{ (m,c,k):k<\dfrac{m+c}{m-c} \right\}. 
 \end{align*}     
 We have shown in ~\cref{sec:dynamical} that, $K_{3}$ is a sink if $k<\dfrac{m+c}{m-c} $ and a source if $k>\dfrac{m+c}{m-c}$.

We choose two groups of parameter values so that in each parameter region 
$\Lambda _{1}$ and $\Lambda _{2}$, there is one and only one group of parameter values: 
\begin{equation}\label{eq5.3}
    \begin{split}
&\mathrm{Para}_{1}= \left\{m=3,c=1,k=3  \right\} \mbox{ corresponding to } \Lambda _{1}, \\ 
&\mathrm{Para}_{2}= \left\{m=3,c=1,k=1.5  \right\} \mbox{ corresponding to } \Lambda _{2}.
    \end{split}
 \end{equation}  
For each group of parameter values, we carry out a sufficient number of simulation runs to estimate expectation values $\mathrm{E}(N(t))$, $\mathrm{E}(P(t))$, 
and corresponding variances $\mathrm{Var}(N(t))$, $\mathrm{Var}(P(t))$, as functions of $t$. These estimates are calculated by the following estimator \cite{abundo1991stochastic}: 
\begin{align*}
&\hat{\mathrm{E}}(x(t))=\dfrac{1}{M ^{\prime} }\sum\limits_{k=1}^{M ^{\prime} }x^{k}(t), \\ 
&\hat{\mathrm{Var}}(x(t))=\dfrac{1}{M ^{\prime} }\sum\limits_{k=1}^{M ^{\prime} }\left[ x^{k}(t)-\hat{\mathrm{E}}(x(t)) \right]^{2}, 
 \end{align*}      
where $x$ represents $N$ and $P$, $M ^{\prime} $ is the number of simulation runs, and $x^{k}(t)$ represents, for every $t$, the approximated value of $x(t)$ obtained by means of the $k$-th simulaltion run.
In our simulations, we take $M =4\times 10^{4}$ and $M ^{\prime} =2\times 10^{4}$. 

When plotting the estimated expectation value $\mathrm{E}(N(t)) $, $\mathrm{E}(P(t)) $, we have also included the error bars for every $t$. To be specific, in these graphs, the middle curve is the estimated 
expectation as a function of $t$, and the upper and lower curve represent, respectively, \begin{align*}
\hat{\mathrm{E} }(x(t))\pm \dfrac{1}{2}\left[\hat{\mathrm{Var} }(x(t)) \right]^{1/2}, \quad x=N \mbox{ or } P.
 \end{align*}    

As illustrated in Figure~\ref{fig:fig1}, under the parameter setting $\mathrm{Para}_{1} $ in (\ref{eq5.3}), 
the trajectory of the estimated expectation of $N$ and $P$ approaches a final state indicated by the arrow. 
In the parameter region $\Lambda _{2}$, the trajectory of the estimated expectation of $N$ and $P$ approaches another final state as is demonstrated in Figure~\ref{fig:fig1}. 
The comparison between the deterministic model (\ref{eq1.3}) and the stochastic model (\ref{eq1.4}) 
demonstrates the discrepancy between the dynamics of the deterministic and stochastic systems and the large deviations of the estimated expectation of the population densities $N$ and $P$.

\begin{figure}[htbp]
    \centering
    \hspace*{-1.6cm} \includegraphics[width=1.1\textwidth]{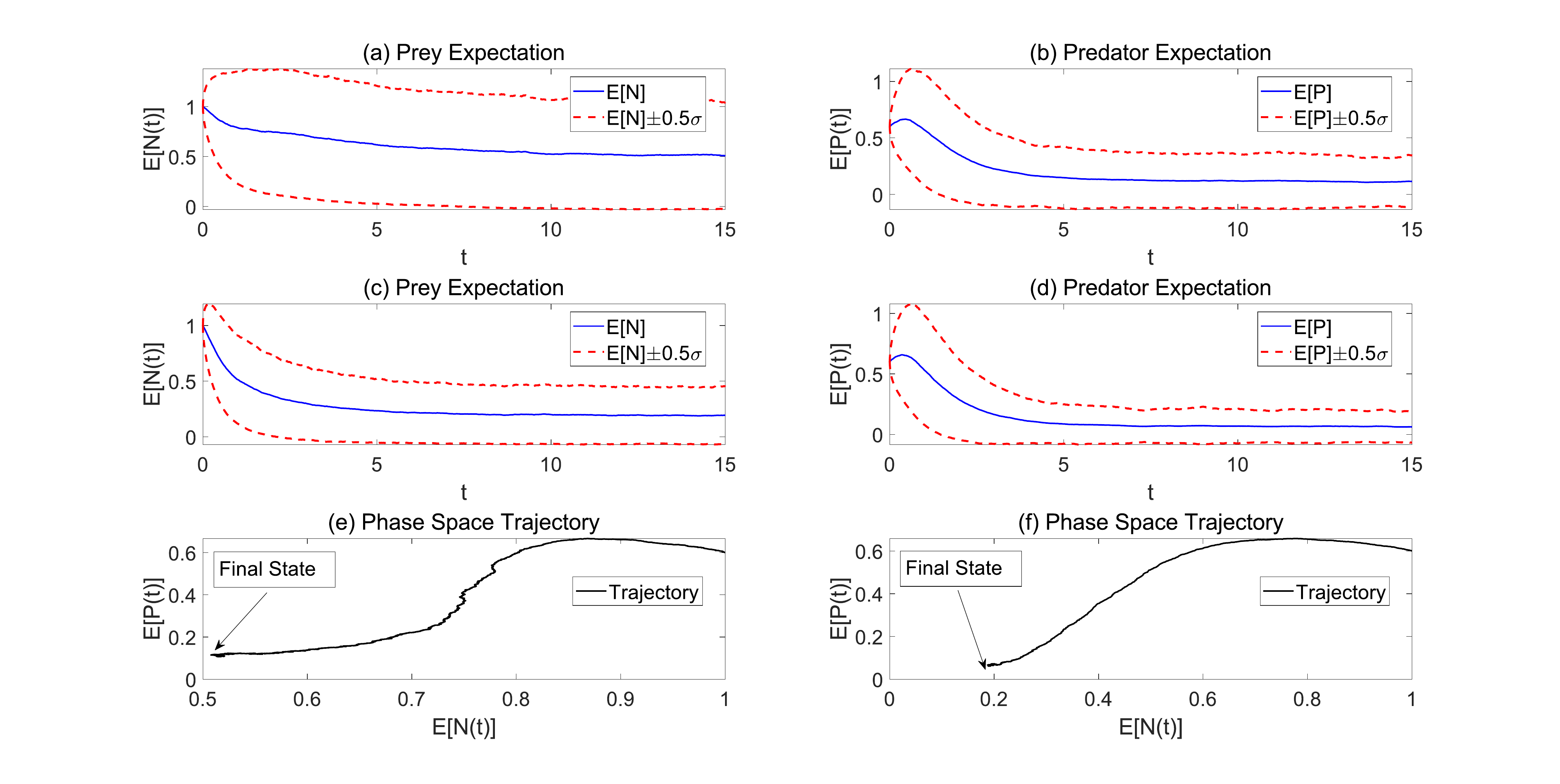}
    \caption{Simulation of estimated population density with error bars and the phase trajectory in the phase space. 
    Figure (a), (b), and (e) is the source equilibrium case with parameter value $\mathrm{Para}_{1} $ in (\ref{eq5.3}), 
    and a unique stable limit cycle exists for the system (\ref{eq1.3}) in this parameter setting. 
    Figure (a) is about $\hat{\mathrm{E}(N(t)) }$ as a function of $t$ (middle curve); the 
    upper and lower curve represent, respectively, $\hat{\mathrm{E} }(N(t))\pm \dfrac{1}{2}\left[\hat{\mathrm{Var} }(N(t)) \right]^{1/2}$; 
    figure (b) is the same thing for $P(t)$; figure (c) 
    is the trajectory $(\hat{\mathrm{E}}(N(t)),\hat{\mathrm{E} }(P(t)) )$ as a function of $t$. 
    Figure (c), (d), and (f) is the sink equilibrium case with parameter value $\mathrm{Para}_{2} $ in (\ref{eq5.3}),
    and a stable positive equilibrium exists for the system (\ref{eq1.3}) in this parameter setting. 
    Figure (c) is about $\hat{\mathrm{E}(N(t)) }$ as a function of $t$ (middle curve); the 
    upper and lower curve represent, respectively, $\hat{\mathrm{E} }(N(t))\pm \dfrac{1}{2}\left[\hat{\mathrm{Var} }(N(t)) \right]^{1/2}$; 
    figure (d) is the same thing for $P(t)$; figure (f) 
    is the trajectory $(\hat{\mathrm{E}}(N(t)),\hat{\mathrm{E} }(P(t)) )$ as a function of $t$.}
    \label{fig:fig1}
\end{figure}

\newpage
\section{Conclusion}\label{sec:conclusion}

We consider a stochastic Rosenzweig-MacArthur model. For the underlying deterministic system, we identify the stability property of three different steady states: 
the trivial steady state corresponding to the extinction of populations, the sole existence steady state of the prey population, 
and the co-existence steady state of the predator and prey population. 
These steady states have different stability properties, and the quantitative changes of parameter values may lead to a transition of the stability of equilibriums, 
indicating a bifurcation and a qualitative change in the dynamical properties of this ecological system.

There is much literature about the impacts of environmental randomness on the deterministic model and little research about the internal fluctuations
due to individuals' growth, reproduction, and deaths. We want to study this internal randomness; thus, the variations in our stochastic model are proportional to the population densities.
Next, we give a criterion for the existence and uniqueness result of SDE constrained on an open submanifold and use it to show that our stochastic system has a unique global strong solution. 
In the meantime, the invariance of the positive domain under our consideration is a biologically obvious result since population densities must be non-negative or positive; the invariance also allows us to restrict the behaviors of the coefficients of our SDE to submanifolds of the whole space $\mathbb{R} ^{n}$. Bearing this idea in mind, we obtain the estimation of the $p$-th moment, and the asymptotic estimation of the stochastic solution. 
The persistence of populations is a direct consequence of the invariance of the open domain and demonstrates that internal fluctuations in population densities do not have the same destabilizing effects as external perturbations and never lead to population extinction.
The asymptotic estimates indicate that the population will exhibit at most exponential growth, which aligns with the analysis results of these classical deterministic ecological models that in an ideal environment with sufficient resources (such as food), a population will experience exponential growth, while in a more realistic ecosystem, the population will undergo limited growth, with the maximum population density not exceeding the environmental carrying capacity due to resource competition.

Finally, we used the Euler-Maruyama scheme to run our simulations and used statistical estimators to calculate the expectations and variances of our population among different simulation runs. 
The numerical results show the discrepancy between the deterministic and stochastic systems and the large deviations of the estimated expectation of the population densities $N$ and $P$, indicating different dynamical properties between the deterministic and stochastic models.

\appendix
\section{Nondimensionalization for (\ref{eq1.2})}\label{app1}

We first have the Rosenzweig-MacArthur predator-prey model (\ref{eq1.2}):
\begin{align*}
        \dfrac{dN}{dt}&=rN(1-\dfrac{N}{K})-\dfrac{sNP}{1+s\tau N}, \\ 
        \dfrac{dP}{dt}&=-cP+d \dfrac{sNP}{1+s\tau N},
\end{align*}
Introducing two numbers $X>0$ and $Y>0$, which will be determined later, and considering the following transformations:
\begin{align*}
x=N/X, \quad y=P/Y,
 \end{align*}   
 we arrive at the following system:
 \begin{align*}
 \dfrac{dx}{dt} &= rx\left( 1-\dfrac{xX}{K}  \right) -\dfrac{sxyY}{1+s\tau xX}, \\ 
 \dfrac{dy}{dt} &= -cy + \dfrac{dsxyX}{1+s\tau xX}.   
  \end{align*} 
Setting $s\tau X = 1$, $Y=dX$, $d=\tau m$, and $k=\dfrac{K}{X} $,  we have 
\begin{align*}
 \dfrac{dx}{dt} &= rx\left( 1-\dfrac{x}{k}  \right) -\dfrac{mxy}{1+x}, \\ 
 \dfrac{dy}{dt} &= -cy + \dfrac{mxy}{1+x}.   
 \end{align*}   
Scaling the variables $t$ by $s=rt$, $m$ by $\bar{m}=\dfrac{m}{r} $, and $\bar{c}=\dfrac{c}{r} $   
we arrive at the final nondimensionalized system (\ref{eq1.3}) after dropping the bars:
\begin{align*}
\dfrac{dx}{ds} & = x\left( 1-\dfrac{x}{k}  \right) -\dfrac{\bar{m}xy}{1+x}, \\
\dfrac{dy}{ds} &= -\bar{c}y + \dfrac{\bar{m}xy}{1+x}.  
 \end{align*}

\newpage

\bibliographystyle{alpha}
\bibliography{sample}

\end{document}